\def\captionof#1#2{{\def\@captype{#1}#2}}
\newcounter{tablegroup}
\newcounter{subtable}[tablegroup]
\newtheorem{thm}{Theorem}[section]
\newtheorem{cor}[thm]{Corollary}
\newtheorem{lem}[thm]{Lemma}
\newtheorem{prop}[thm]{Proposition}
\numberwithin{equation}{section}
\theoremstyle{remark}
\newtheorem*{rem}{\bf Remarks}
\newtheorem*{que}{\bf Question}
\newcommand{\Real}{\mathbb R}
\newcommand{\eps}{\varepsilon}
\newcommand{\Or}[1]{\textrm{Orb}_{G}(#1)}
\newcommand{\F}{\mathbb{F}}
\newcommand{\tend}[3][]{\xrightarrow[#2\to#3]{#1}}
\newcommand{\ds}{\displaystyle}
\newcommand{\Z}{\mathbb{Z}}
\newcommand{\C}{\mathbb{C}}
\newcommand{\Co}{\mathcal{C}}
\newcommand{\Pa}{\mathcal{P}}
\newcommand{\diam}{\textrm{diam}}
\title[ Actions with finite orbits on local dendrites]
{Actions with finite orbits on local dendrites}
\author{e. H. e. Abdalaoui and I. Naghmouchi}
\date{\today}
\address{el Houcein el Abdalaoui, University of Rouen Normandy,
	Department of Mathematics, LMRS  UMR 6085 CNRS, Avenue de l'Universit\'e, BP.12,
	76801 Saint Etienne du Rouvray - France.}
\email{elhoucein.elabdalaoui@univ-rouen.fr}
\address{ Issam Naghmouchi, University of Carthage, Faculty of Science of Bizerte, (UR17ES21), Dynamical systems and their applications, 7021, Jarzouna, Tunisia.}
\email{issam.naghmouchi@fsb.rnu.tn and issam.nagh@gmail.com}
\subjclass[2010]{Primary: 37B05,37B20; Secondary: 54H20; 54H15; 22A25.
}
\keywords{equicontinuity, dendrite, local dendrite, graph, minimal set, amenable group, free group, Thompson group, proximal action, strongly proximal action}
\begin{document}
\begin{abstract} It is shown that the restriction of the action of any group with finite orbit on the minimal sets of local dendrites is equicontinuous. Consequently, we obtain that the action of any amenable group and Thompson group restricted to any minimal sets of dendrite is equicontinuous. We further provide a class of non-amenable groups whose action on the minimal sets of local dendrites is equicontinuous. Moreover, we extend some of our results to dendron. We further give a characterization of the set of invariant probability measures and its extreme points.
\end{abstract}
\maketitle

\section{\bf Introduction.}

This paper deals with the action of group on local dendrites. It turns out that the minimal action on the nondegenerete dendrite is similar to the minimal action on the circle. For the group action on the circle, Margulis proved that either the group is not amenable or there is a G-invariant probability measure \cite{Marg}. It is also well-known that the minimal sets can only be the whole circle, or a finite set, or a  \linebreak Cantor set \cite{Beklaryan}. Moreover, the minimal action  on the circle is either equicontinuous or strongly proximal, and if the later case holds then the group must contains a free non-commutative subgroup
 \cite{Marg}. Nowadays there are several proofs of this theorem, see \cite[Section 5.2]{Ghys}, \cite[Chap.2, p. 54]{Navas}, \cite[Theorem 4.5]{BT}.\\

For dendrite, H. Marzougui and the second author classify the minimal sets of the group action on dendrite \cite{M-Nag} and local dendrites \cite{M-Nag2}. This classification is analogue to the case of the circle.\\

Exploiting result from \cite{M-Nag}, E. H. Shi and X. D. Ye proved that every amenable group action on dendrites has a fixed point or finite orbit of order $2$ \cite{SYO}.  Very recently, E. Glasner and M. Megrelishvili proved that  every continuous group action of $G$ on a dendron is tame \cite{Gla-M}. They asked
\begin{que}	Is there an amenable group $G$, an action of $G$ on a dendrite $X$, and a minimal subset $Y \subset X$, such that the system $(G,Y)$ is almost automorphic but not equicontinuous?
\end{que}

The answer to this question was given by E. H. Shi and X. D. Ye in \cite{SYR}. They proved that the restriction action of any amenable group on any minimal set $K$ is equicontinuous. Their proof is  based on their result mentioned above.\\

In this paper, we will strengthened this result by considering the action on a local dendrite and by relaxing the amenability condition.\\

Indeed, we will established that if the group action admit a finite orbit then its restriction on any minimal set $K$ is equicontinuous. As a consequence, we get that the  restriction of Thompson group on any minimal set $K$ of local dendrite is equicontinuous.  We further obtain the result from \cite[Corollary 6.7]{M-Nag}.\\

Moreover, we will proved that there is a class of non-amenable groups for which the infinite minimal sets are a Cantor and the action has a finite orbit. Finally, we will describe the set of invariant probability measures and its extreme points.\\

Our proof is based on the descriptions of the structure of a minimal set and its convex hull and how finite orbits occur on its convex hull.\\

According to our results, it follows that the equicontnuity of the action on the minimal sets of local dendrite can not separate the amenable and not amenable groups. Nevertheless, we hope that the strategy of searching for the good topological dynamical property will serve to enlighten the  problem of amenability of Thompson group.\\

The plan of this paper is as follows. In Section \ref{tool}, we recall some definitions and tools on dendrites and local dendrites which are useful for the rest of the paper. In section \ref{SMain-1}, we state our main results and we give the proof of our main first result. In section \ref{SMain-2}, we recall some basic definitions and tools on non-amenable groups and Thompson group $F$, we further give the proof of our second and third main results. Finally, in section \ref{SMain-3}, we describe the set of invariant measures and its extreme points.


\section{Set up and tools.}\label{tool}
Let $G$ be a group acting continuously on the topological space $X$, that is, there is a family $(T_g)_{g \in G}$ of continuous maps from $X$ to $X$ such that for any $g,g' \in G$, we have $T_g \circ T_{g'}=T_{gg'},$ and $T_e=Id_X$ where $e$ is the identity element of $G$ and $Id_X$ is the identity map on $X$. Obviously, for each $g\in G$, $T_g$ is an element of the group of homeomorphism on $X$ denoted by $Hom(X)$. For any $x\in X$, the subset
$\textrm{Orb}_{G}(x) = \{T_g(x): g \in G\}$ is called the \textit{orbit} of $x$ (under $G$).
A subset $A\subset X$ is called \textit{$G-$invariant} (resp. strongly $G-$invariant)
if $T_g(A)\subset A,$ for each $g \in G$ (resp., $T_g(A) = A$). It is called \textit{a minimal set of $G$} if it is
non-empty, closed, $G$-invariant and minimal (in the sense of
inclusion) for these properties, this is equivalent to say that it is an orbit closure that contains no smaller one;
for example a single finite
orbit. When $X$ itself is a minimal set, then we say that $G$ act \textit{minimally}. As usual, we denote the closure of any subset $A$  by $\overline{A}$. The orbit of a point $x \in X$ is said to be finite if $\overline{\Or{x})}=\Or{x}$ is a finite set.  Obviously, if a point $x$ is an atom of an invariant probability measure then its orbit is finite.  The action is said to have \textit{a finite orbit} if it admit a finite orbit.\\

For the case of action on dendrite, it is straightforward to see that the action has a finite orbit if and only if there is an invariant measure on $X$. Notice further that in this case the action fixed an arc. We shall strengthened this result to the action of group on dendron. \\

The point $x$ is said to be a recurrent point if there exists a net $(g_i)$ in $G$ such that $\lim g_i.x \rightarrow x$ and $g_i(x)\neq x$ for all $i$. \\

The action is proximal if for every $(x,y)\in X^2$ there is a net $(s_i)$ in $G$ and point $z \in X$ such that $lim s_i.x=\lim s_i.y=z$. It is well known that if the action is proximal then there is unique minimal set.\\

Let $\Co(X)$ be the space of continuous functions on $X$ equipped with the weak-star topology. By the classical representation theorem of Riesz the dual of $\Co(x)$ denoted by $\Co^*(X)$ is the space of the Borel bounded measures on $X$. Let $\Pa(X)$ be the set of probabilities measures on $X$. The elementary probability measures are given the Dirac measure on a point. We denote the Dirac measure on a point $x$ by $\delta_x$. By the theorem of Banach-Alaoglu-Bourbaki, $\Pa(X)$ is a compact convex set of the space of measure on $X$. The action of $G$ on $X$ induce an affine action on the $\Pa(X)$ as follows:
$$(g,\mu) \mapsto g\mu,$$
where $g\mu$ is the push-forward measure given by
$$g\mu(f)=\int f(g.x) d\mu(x),~~~~~~~~~~~~\forall f \in \Co$$
If the action of $G$ on $\Pa(X)$ is proximal we say that the action $G$ on $X$ is strongly proximal.  The compact set of $G$-invariant measures is denoted by $\Pa(X \looparrowleft G)$. We recall that $\mu$ is in $\Pa(X \looparrowleft G)$ if for any Borel set $A$, $\mu(g.A)=\mu(A).$ \\

Let $G$ acts on two compact space $X,Y$ and assume that there is a  subjective homeomorphism  $X$ to $Y$ such that
$$\phi \circ T_g=T_g \circ \phi,$$
then $Y$ is said to be a factor of $X$. By Lemma  3.7 from \cite{BT}, the factor of proximal action is proximal, so is for the factor of strongly proximal action.\\

Let us recall now the definitions and some basic properties of dendrites and local dendrites.\\

A \emph{continuum} is a compact connected space. Following \cite{Ward}, for any topological property $\mathcal{P}$, a continuum is said to be \textit{rim-}{$\mathcal{P}$} if it has a basis of open sets with boundaries enjoy property $\mathcal{P}$. For the rim-finite case the space are also called regular space \cite[Chap. VI, $\S{}$ 51, p. 274]{Kur}. Please, notice that therein the notion of order of the point is given with respect to the cardinality of the boundaries . An \emph{arc} is
any space homeomorphic to the compact interval $[0,1]$. A topological space
is \emph{arcwise connected} if any two of its points can be joined by an
arc.\\

By a \textit{graph} $X$, we mean a continuum which can be written as the union of finitely many arcs
such that any two of them are either
disjoint or intersect only in one or both of their endpoints.
%
\medskip

By a \textit{dendrite} $X$, we mean a locally connected metrizable continuum
containing no homeomorphic copy to a circle. Every sub-continuum of a
dendrite is a dendrite (\cite{nadler}, Theorem 10.10) and every
connected subset of $X$ is arc-wise connected (\cite{nadler},
Proposition 10.9). In addition, any two distinct points $x,y$ of a
dendrite $X$ can be joined by a unique arc with endpoints $x$ and
$y$, denote this arc by $[x,y]$ and let denote by
$[x,y) = [x,y]\setminus\{y\}$ (resp. $(x,y] = [x,y]\setminus\{x\}$ and
$(x,y) = [x,y]\setminus\{x,y\}$). A point $x\in X$ is called an
\textit{endpoint} if $X\setminus\{x\}$ is connected. It is called a
\textit{branch point} if $X\setminus \{x\}$ has more than two
connected components. The number of connected components of $X\setminus \{x\}$ is called the \textit{order} of $x$ and
denoted by ord$(x)$. The order of $x$ relatively to a subdendrite $Y$ of $X$ is denoted by $ord_Y(x)$.\\

Let us denote again by $E(X)$ and $B(X)$ the sets of
endpoints, and branch points of $X$, respectively.
By (\cite{Kur}, Theorem 6, 304 and Theorem 7, 302), $B(X)$ is at most countable. A point $x\in
X\setminus E(X)$ is called a \textit{cut point}. It is known that the  set of cut
points of $X$ is dense in $X$ (\cite{Kur}, VI, Theorem 8, p. 302).
Following (\cite{Ar}, Corollary 3.6), for any dendrite $X$, we have
B($X)$ is discrete whenever E($X)$ is closed.  An arc $I$ of $X$ is called \emph{free} if $I \cap B(X)=\emptyset$.
For a subset $A$ of $X$, we call \emph{the convex hull} of $A$, denoted by $[A]$, the intersection of all
sub-continua of $X$ containing $A$, one can write $[A] = \cup_{x, y\in A}[x,y]$.\\

We further have that $X$ is a dendrite if and only if any two
points of $X$ are separated in $X$ by a third point of $X$ (\cite{nadler},
Theorem 10.2). We recall that a point $z$ separates $x$ and $y$ in $X$ if there exist in $X$
open disjoint neighborhoods $U$, $V$ of $x$ and $y$ respectively such that $X \setminus \{z\} = U \cup V .$\\

More generally, a continuum $X$ is said to be a {\textit{dendron}} if every pair of distinct points $x,y$ can be separated in $X$ by a third point $z$. Obviously, a dendrite is a metrizable  dendron. A local dendron is a continuum having the property that every of its points has
a neighborhood which is a dendron. A \textit{local dendrite} is a metrizable local dendron. It is easy to see that the dendron is rim-finite. More generally, we have that any local dendron is rim-finite \cite{Ward}). Whence any local dendrite is rim-finite (for a direct proof, we refer to \cite[Theorem 1. page 303]{Kur}). We further have that any local dendrite is a locally connected continuum which contains at most
a finite number of simple closed curves (\cite[Theorem 4. page 304]{Kur}). It follows that every sub-continuum of a local dendrite is a local dendrite. Moreover, every graph and every dendrite is a local dendrite.\\

For a local dendrite $X$ containing at least one circle, let us denote by $\Gamma_X$ the minimal graph (in the sense of inclusion) which contain all the circles in $X$ (i.e.
the intersection of all graphs in $X$ that contain all the circles in $X$).\\

If $Y$ is a sub-continuum of a dendron $X$, then the canonical retraction of $X$ onto $Y$ is denoted by $r_Y$, for more details see \cite{dendron} (see also Theorem 3.12 in \cite{Gla-M}). If $a$ and $b$ are two distinct points in a dendron $X$, then we define the \textit{generalized arc} as follows:
$$[a,b]=\{x\in X: \ x \ \textrm{separated} \ a \textrm{~from~} \ b \textrm{~in~} \ X \}\cup \{u,v\}.$$

The convex hull of a given subset in a dendron is defined and denoted similarly as in the case of dendrite.\\

Let us also recall that the Vietoris topology is defined on the non-empty closed sets of $X$ denoted by $2^X$. It turns out $2^X$ equipped with Vietoris topology is compact and metrizable since $X$ is a compact metric space \cite{M-Nad}. Furthermore, the  metric is given by $$D(F_1,F_2)=\max\Big\{\sup_{x \in F_1}d(x,F_2),\sup_{y \in F_2}d(y,F_1)\Big\}, $$
where $d(x,F)=\inf_{y \in F}d(x,y).$
The metric $D$ is known as Hausdorff metric.\\

We notice further that the map $A \mapsto [A]$ is continuous with respect to the Vietoris topology \cite{Dush-Mono} when $X$ is a dendrite.\\

For a family of open set $\mathcal{U}$, we define
$\textrm{mesh}(\mathcal{U})$ by
$$\textrm{mesh}(\mathcal{U})=\sup\Big\{diam(U)/ U \in \mathcal{U} \Big\}.$$

Using the compactness of the Vietoris topology, it can be seen without using Zorn lemma that $X$ admit a  minimal set \cite[p.30]{Katok}. On the other hand, by applying Zorn lemma, it is easy to see the following.

\begin{prop}\label{sous-dendrite}Let $G$ a group acting on the dendrite $X$. Then, there exists a minimal $G$-invariant subdendrite $Y$.
\end{prop}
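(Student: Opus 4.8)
The plan is to run a routine Zorn's lemma argument on the poset of $G$-invariant subdendrites of $X$ ordered by reverse inclusion. First I would introduce the collection $\mathcal{F}=\{Y\subseteq X:\ Y\neq\emptyset,\ Y\ \text{closed and connected},\ T_g(Y)\subseteq Y\ \forall g\in G\}$. Since every subcontinuum of a dendrite is again a dendrite, each member of $\mathcal{F}$ is a subdendrite of $X$, and $X\in\mathcal{F}$, so $\mathcal{F}\neq\emptyset$. Ordering $\mathcal{F}$ by $Y_1\preceq Y_2\Leftrightarrow Y_1\supseteq Y_2$, a $\preceq$-maximal element of $\mathcal{F}$ is precisely an inclusion-minimal $G$-invariant subdendrite, so the whole task reduces to verifying the chain hypothesis of Zorn's lemma.

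Next I would take a chain $\{Y_\alpha\}$ in $\mathcal{F}$ (totally ordered by inclusion) and propose $Y:=\bigcap_\alpha Y_\alpha$ as an upper bound. Closedness and $G$-invariance of $Y$ are immediate, and nonemptiness follows from compactness of $X$, since the nested family $\{Y_\alpha\}$ has the finite intersection property. The substantive point is the connectedness of $Y$: I would argue by contradiction, writing $Y=P\sqcup Q$ with $P,Q$ nonempty, closed and disjoint, using normality of the metric space $X$ to separate them by disjoint open sets $U\supseteq P$ and $V\supseteq Q$, and then noting that the closed sets $Y_\alpha\setminus(U\cup V)$ again form a nested family. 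If all of them were nonempty, their intersection $Y\setminus(U\cup V)$ would be nonempty, which is impossible since $Y=P\cup Q\subseteq U\cup V$; hence $Y_\beta\subseteq U\cup V$ for some $\beta$. But $Y\subseteq Y_\beta$ forces $Y_\beta$ to meet both $U$ and $V$, contradicting the connectedness of $Y_\beta$. Thus $Y$ is connected, so $Y\in\mathcal{F}$ and is an upper bound for the chain.

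Finally, Zorn's lemma yields a $\preceq$-maximal $Y\in\mathcal{F}$; being a subcontinuum of the dendrite $X$ it is a subdendrite, and by maximality it contains no proper nonempty closed connected $G$-invariant subset, i.e. it is a minimal $G$-invariant subdendrite. The main obstacle — indeed the only step beyond bookkeeping — is the connectedness of the intersection of the chain: because the chain may be uncountable, a sequential argument is unavailable and one must use the finite-intersection-property-plus-normality argument above (equivalently, one may simply invoke the classical fact that a nested intersection of subcontinua of a compact Hausdorff space is a subcontinuum). I would also flag that the $Y$ produced here is minimal among \emph{subdendrites}, which is weaker than being a minimal set in the dynamical sense.
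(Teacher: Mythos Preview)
Your argument is correct and is exactly the approach the paper intends: the paper simply asserts that the proposition follows ``by applying Zorn lemma'' without giving any details, and you have supplied the standard verification that a chain of $G$-invariant subcontinua has a nonempty connected intersection. Your closing remark that $Y$ is minimal only among subdendrites also matches the paper's explicit warning immediately following the statement.
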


We warn the reader here that the minimality is used in the sense of inclusion in the family of all $G$-invariant subdendrites. Indeed, it is well known that there is example of minimal subdendrite which is not minimal in the usual sense.\\

For the strongly proximal action, the unique minimal set is described by the following lemma from \cite{BT}\footnote{Therein, the results are stated for the action of semi-group. This leads us to ask whether it is possible to extend the results of this paper to the action of semi-group.	
}.
\begin{lem}\label{BouT}Let $G$ acts on a compact space $X$ and
	$M=$\linebreak $\big\{x \in X/ \forall \mu \in \Pa(X), \delta_x \in \overline{\text{Orb}_G(\mu)}\big\}$. Then, the following are equivalent.
	\begin{enumerate}
		\item The action is strongly proximal.
		\item   $M$ is the unique minimal subset.
		\item  $M$ is nonempty.
	\end{enumerate}
\end{lem}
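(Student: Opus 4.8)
The plan is to prove the cyclic chain $(3)\Rightarrow(1)\Rightarrow(2)\Rightarrow(3)$, with essentially all the content concentrated in the first implication. Before starting I would record two routine facts. First, $M$ is closed and $G$-invariant: it is the intersection over $\mu\in\Pa(X)$ of the sets $M_\mu:=\{x\in X:\ \delta_x\in\overline{\Or{\mu}}\}$, each of which is the preimage of the closed set $\overline{\Or{\mu}}$ under the continuous embedding $x\mapsto\delta_x$, hence closed, and is $G$-invariant because $\Or{g\mu}\subseteq\Or{\mu}$ gives $\overline{\Or{g\mu}}\subseteq\overline{\Or{\mu}}$. Second, $x\mapsto\delta_x$ is a homeomorphism of $X$ onto a closed subset of $\Pa(X)$, so $\overline{\Or{\delta_x}}=\{\delta_z:\ z\in\overline{\Or{x}}\}$, and each $\delta_x$ is an extreme point of $\Pa(X)$. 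Both of these will be used without further comment.

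For $(3)\Rightarrow(1)$ I would fix $x_0\in M$ and arbitrary $\mu,\nu\in\Pa(X)$ and produce a \emph{single} net $(g_i)$ with $g_i\mu\to\delta_{x_0}$ and $g_i\nu\to\delta_{x_0}$. Consider the continuous, affine, $G$-equivariant averaging map $\Phi:\Pa(X)\times\Pa(X)\to\Pa(X)$, $\Phi(\alpha,\beta)=\tfrac12(\alpha+\beta)$, and put $Z=\overline{\Or{(\mu,\nu)}}$ inside the compact $G$-space $\Pa(X)\times\Pa(X)$. Then $\Phi(Z)$ is a compact, hence closed, $G$-invariant subset of $\Pa(X)$ containing $\Phi(\mu,\nu)=\tfrac12(\mu+\nu)$, so it contains $\overline{\Or{\tfrac12(\mu+\nu)}}$, which by $x_0\in M$ contains $\delta_{x_0}$. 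Therefore $\delta_{x_0}=\tfrac12(\alpha+\beta)$ for some $(\alpha,\beta)\in Z$, and extremality of $\delta_{x_0}$ forces $\alpha=\beta=\delta_{x_0}$, i.e. $(\delta_{x_0},\delta_{x_0})\in Z$. Unwinding the definition of $Z$ yields the required net, so the induced action on $\Pa(X)$ is proximal, that is, $G\curvearrowright X$ is strongly proximal.

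For $(1)\Rightarrow(2)$ I would argue as follows. Since the affine action on $\Pa(X)$ is proximal it has a unique minimal subset $N$, and $N$ lies in every orbit closure in $\Pa(X)$ (each orbit closure is compact, hence contains a minimal subset, which must be $N$). Applying this to $\overline{\Or{\delta_x}}=\{\delta_z:\ z\in\overline{\Or{x}}\}\subseteq\{\delta_z:\ z\in X\}$ for every $x$ shows $N\subseteq\{\delta_z:\ z\in X\}$; set $K:=\{z\in X:\ \delta_z\in N\}$, a closed $G$-invariant set which is minimal in $X$ because $z\mapsto\delta_z$ carries the minimal set $N$ onto it equivariantly. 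Then $K=M$: if $z\in K$ then $\delta_z\in N\subseteq\overline{\Or{\mu}}$ for all $\mu\in\Pa(X)$, so $z\in M$; conversely if $x\in M$ then, picking any $\mu\in N$ and using minimality of $N$, $\delta_x\in\overline{\Or{\mu}}=N$, so $x\in K$. Finally, if $Y\subseteq X$ is any minimal set then $\Pa(Y)\subseteq\Pa(X)$ contains a minimal subset of $\Pa(X)$, necessarily $N$, so $K\subseteq Y$, and minimality of $Y$ forces $Y=K=M$; thus $M$ is the unique minimal set of $X$. The implication $(2)\Rightarrow(3)$ is immediate, since a minimal set is nonempty by definition.

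The step I expect to be the main obstacle is $(3)\Rightarrow(1)$: a priori, $x_0\in M$ only provides, for given $\mu$ and $\nu$, two possibly unrelated nets pushing $\mu$ and $\nu$ \emph{separately} toward $\delta_{x_0}$, and there is no obvious way to merge them. The role of the equivariant averaging map $\Phi$, combined with the extremality of Dirac masses, is exactly to synchronize the two nets into one; everything else is bookkeeping with closed invariant sets and the embedding $x\mapsto\delta_x$.
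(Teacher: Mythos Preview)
Your argument is correct. Note, however, that the paper does not supply its own proof of this lemma: it is quoted verbatim from Bouziad--Troallic \cite{BT} and used as a black box, so there is no ``paper's proof'' to compare against. Your cyclic scheme $(3)\Rightarrow(1)\Rightarrow(2)\Rightarrow(3)$, with the key step $(3)\Rightarrow(1)$ carried out via the equivariant averaging map $\Phi(\alpha,\beta)=\tfrac12(\alpha+\beta)$ together with the extremality of Dirac masses in $\Pa(X)$, is exactly the standard device used in this circle of ideas (and is essentially how the result is proved in \cite{BT}); the remaining implications are routine once one knows that a proximal action has a unique minimal set, a fact the paper itself records just before the lemma. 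One cosmetic remark: your justification that $M$ is $G$-invariant is phrased a bit obliquely; it is cleaner to say that each $M_\mu$ is $G$-invariant because $\overline{\Or{\mu}}$ is $G$-invariant (as $G$ is a group acting by homeomorphisms on $\Pa(X)$), whence so is the intersection $M=\bigcap_\mu M_\mu$.
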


We need also the following lemmas from \cite{nadler} and \cite{dendron} .

\begin{lem}\label{arc} Let $X$ be a dendrite with metric $d$. Then, for every $\eps > 0$, there is a $\delta > 0$
	such that $diam([x, y]) < \eps$ whenever $d(x, y) < \delta$.
\end{lem}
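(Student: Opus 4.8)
The plan is to argue by contradiction, using the compactness of $X$ to extract convergent subsequences and then exploiting the basic dendrite fact that an interior point of an arc separates its two endpoints. (A slicker alternative, noted at the end, is to quote the continuity of the convex-hull map recalled above.)

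Suppose the conclusion fails for some $\eps>0$. Then there are points $x_n,y_n\in X$ with $d(x_n,y_n)\to 0$ while $\diam([x_n,y_n])\ge\eps$ for every $n$. Each arc $[x_n,y_n]$ is compact, so its diameter is attained: choose $a_n,b_n\in[x_n,y_n]$ with $d(a_n,b_n)=\diam([x_n,y_n])\ge\eps$. By compactness of $X$ we pass to a subsequence along which $x_n\to p$, $a_n\to a$ and $b_n\to b$; since $d(x_n,y_n)\to 0$ we also have $y_n\to p$, and since $d(a_n,b_n)\ge\eps$ the limits satisfy $a\ne b$.

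Since $a\ne b$, the arc $[a,b]$ exists in $X$ and the open sub-arc $(a,b)$ is infinite, so we may pick $c\in(a,b)$ with $c\ne p$. As $c$ is an interior point of $[a,b]$ it separates $a$ from $b$ in $X$: fix disjoint open sets $U\ni a$, $V\ni b$ with $X\setminus\{c\}=U\cup V$. For all large $n$ we have $a_n\in U$ and $b_n\in V$, so the connected set $[a_n,b_n]$ cannot be contained in $U\cup V$ and hence must contain $c$; thus $c\in[a_n,b_n]\subseteq[x_n,y_n]$ with $c\ne a_n$ and $c\ne b_n$. Since $[a_n,b_n]$ is a sub-arc of the arc $[x_n,y_n]$, the endpoints $x_n$ and $y_n$ cannot lie in the open sub-arc $(a_n,b_n)$, so $c\ne x_n$ and $c\ne y_n$; therefore $c$ is an interior point of $[x_n,y_n]$ and so separates $x_n$ from $y_n$ in $X$. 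Consequently one of $x_n,y_n$ lies in $U$ and the other in $V$. But $x_n\to p$ and $y_n\to p$ with $p\ne c$, so $p$ belongs to one of the open sets $U$, $V$, and then for all large $n$ both $x_n$ and $y_n$ lie in that same set, contradicting the previous sentence. This contradiction proves the lemma.

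The argument is elementary; the only ingredients specific to dendrites are the existence and uniqueness of arcs (so that ``sub-arc of an arc'' makes sense) and the standard fact that an interior point of an arc separates its endpoints, so I do not expect a real obstacle here. Alternatively, one may avoid the contradiction altogether: the continuity of the convex-hull map $A\mapsto[A]$ on $2^X$ recalled above makes $(x,y)\mapsto[x,y]$ a continuous, hence uniformly continuous, map from the compact space $X\times X$ into $(2^X,D)$; applying uniform continuity near the diagonal, where $[x,x]=\{x\}$ has diameter $0$, yields at once that $\diam([x,y])\to 0$ as $d(x,y)\to 0$.
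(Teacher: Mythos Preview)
The paper does not actually prove this lemma: it is quoted from Nadler's \emph{Continuum Theory} (the sentence ``We need also the following lemmas from \cite{nadler} and \cite{dendron}'' introduces it), so there is no in-paper argument to compare against. Your contradiction argument is correct. The only step that deserves one more word is ``Consequently one of $x_n,y_n$ lies in $U$ and the other in $V$'': the fact that $c$ separates $x_n$ from $y_n$ only says they lie in different components of $X\setminus\{c\}$, not a priori in different members of the chosen clopen partition $\{U,V\}$. The fix is already implicit in what you wrote: the two half-arcs $[x_n,c)$ and $(c,y_n]$ are connected, and one contains $a_n\in U$ while the other contains $b_n\in V$, so one half-arc lies in $U$ and the other in $V$, forcing $x_n$ and $y_n$ into opposite sides.

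Your alternative via the continuity of $A\mapsto [A]$ (which the paper records from \cite{Dush-Mono}) is also valid and is in fact the shortest route: composing with the continuous map $(x,y)\mapsto\{x,y\}$ and with $\diam\colon 2^X\to\R$ gives a continuous function on the compact space $X\times X$ that vanishes on the diagonal, and uniform continuity finishes it. Interestingly, the paper itself later alludes to this shortcut (in the proof of Claim~1 of Theorem~\ref{equicon} it remarks that ``an alternative proof can be given using the continuity of the map $F\mapsto [F]$''), so your second approach is very much in the spirit of the paper.
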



\begin{lem}[\cite{dendron}]\label{Closed-fa} Let $X$ be a dendron. Then
\begin{enumerate}[(i)]
\item $X$ is locally connected.	
\item If $(Y_i)_{i \in I}$ is a family of subcontinua of $X$ such that for any $i \neq j$,  $Y_i \cap Y_j \neq \emptyset$. Then,
$\ds \bigcap_{i \in I}Y_i \neq \emptyset.$
\item For any $x, y \in X$, $$\ds [x,y]=\bigcap\Big\{C \textrm{~~subcontinuum~~of~~$X$ ~~which~~contains~~} x,y  \Big\}.$$
\end{enumerate}	
\end{lem}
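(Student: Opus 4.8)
The plan is to prove the three assertions in the order (iii) $\Rightarrow$ hereditary unicoherence $\Rightarrow$ (ii) $\Rightarrow$ (i); the arc structure supplied by (iii) is what makes the rest work, so I would start there.

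For (iii), fix distinct $x,y\in X$ and let $\mathcal F$ denote the family of subcontinua of $X$ that contain both $x$ and $y$ (nonempty, since $X\in\mathcal F$). One inclusion uses only connectedness and not the dendron hypothesis: if $z$ separates $x$ from $y$, say $X\setminus\{z\}=U\cup V$ with $U,V$ disjoint, open, $x\in U$, $y\in V$, then every $C\in\mathcal F$ must contain $z$, since otherwise $C\cap U$ and $C\cap V$ would disconnect $C$; hence $[x,y]\subseteq\bigcap\mathcal F$. For the reverse inclusion it suffices to show $[x,y]\in\mathcal F$, i.e.\ that $[x,y]$ is closed and connected, and this is the technical heart of the whole lemma. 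I would follow the classical route: order $[x,y]$ by declaring $a\preceq b$ iff $a\in[x,b]$ (with $[x,x]:=\{x\}$), and then verify, using repeatedly the defining separation property of a dendron, that $\preceq$ is a dense linear order on $[x,y]$ with least element $x$ and greatest element $y$, that its order topology coincides with the subspace topology, and (using compactness of $X$) that $[x,y]$ is closed in $X$ and $\preceq$ is order-complete. Then $[x,y]$ is a compact, densely ordered, linearly ordered space, hence connected, hence a subcontinuum of $X$ containing $x$ and $y$, which gives (iii). I expect the verification that $\preceq$ is \emph{total} and that the two topologies coincide to be the main obstacle: each step is a short argument about separating points and about the components of $X\setminus\{z\}$, but they must be assembled carefully and, importantly, without invoking unicoherence, which is only available afterwards.

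From (iii) I would read off that $X$ is hereditarily unicoherent: if $A,B$ are subcontinua and $p,q\in A\cap B$, then by the easy inclusion above the connected set $[p,q]$ lies in both $A$ and $B$, so $p$ and $q$ lie in a single component of $A\cap B$; as $p,q$ were arbitrary, $A\cap B$ is connected. For (ii), the finite intersection property in the compact space $X$ reduces the statement to a finite family $Y_1,\dots,Y_n$ with $Y_i\cap Y_j\neq\emptyset$ for all $i,j$, and I would prove by induction on $n$ that $\bigcap_{i\le n}Y_i$ is a nonempty subcontinuum. The case $n=1$ is trivial and $n=2$ is hereditary unicoherence. For $n=3$, choose $p\in Y_2\cap Y_3$, $q\in Y_1\cap Y_3$, $r\in Y_1\cap Y_2$; then $[q,r]\subseteq Y_1$, $[p,r]\subseteq Y_2$, $[p,q]\subseteq Y_3$, and the common point of these three generalized arcs (its existence is a standard consequence of the arc structure of (iii)) lies in $Y_1\cap Y_2\cap Y_3$, which is then connected by hereditary unicoherence. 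For $n\ge4$, put $B_i:=Y_i\cap Y_n$ for $i<n$; each $B_i$ is a nonempty subcontinuum by the case $n=2$, and $B_i\cap B_j=Y_i\cap Y_j\cap Y_n\neq\emptyset$ by the case $n=3$, so the inductive hypothesis applied to $B_1,\dots,B_{n-1}$ gives that $\bigcap_{i<n}B_i=\bigcap_{i\le n}Y_i$ is a nonempty subcontinuum. Compactness of $X$ then removes the finiteness restriction.

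For (i) I would show that $X$ is connected im kleinen at each point, which for a compact Hausdorff space is equivalent to local connectedness. Fix $p$ and an open set $U\ni p$. For each $y\in X\setminus U$, using (iii) pick a point $z_y\in[p,y]\cap U$ with $z_y\neq p$; this is possible since $[p,y]$ is a nondegenerate continuum and $U\cap[p,y]$ is a nonempty open subset of it, hence not equal to $\{p\}$. Since $y\notin U$ we have $z_y\neq y$, so $z_y$ is an interior point of the arc $[p,y]$ and therefore separates $p$ from $y$; write $X\setminus\{z_y\}=A_y\cup B_y$ with $p\in A_y$, $y\in B_y$. Then $C_y:=A_y\cup\{z_y\}=X\setminus B_y$ is a subcontinuum containing $p$ (closed because $B_y$ is open; connected because a proper clopen subset of $C_y$ avoiding $z_y$ would be clopen in $X$). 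By compactness $X\setminus U\subseteq B_{y_1}\cup\dots\cup B_{y_n}$ for some $y_1,\dots,y_n$, so $\bigcap_{j\le n}A_{y_j}\subseteq U$, and since each $z_{y_j}\in U$ we also get $N:=\bigcap_{j\le n}C_{y_j}\subseteq U$. By hereditary unicoherence $N$ is a subcontinuum, and it contains the open set $\bigcap_{j\le n}A_{y_j}$, which is a neighbourhood of $p$; hence $N$ is a connected subset of $U$ that is a neighbourhood of $p$, which establishes (i).
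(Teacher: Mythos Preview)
The paper does not actually prove this lemma: immediately after the statement it simply records that (i) and (ii) are Corollary~2.15.1 of van~Mill--Wattel and that (iii) follows from Lemma~2.7 combined with Corollary~2.14 of the same reference, as observed by Malyutin. So there is no in-paper argument to compare yours against; you have supplied what the authors chose to outsource.

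Your route (iii) $\Rightarrow$ hereditary unicoherence $\Rightarrow$ (ii) $\Rightarrow$ (i) is sound and is essentially the classical one. A couple of places deserve a line more of justification. In the $n=3$ step of (ii) you invoke ``the common point of the three generalized arcs'' as a standard consequence of (iii); it is, but since you are building everything from scratch it is cleaner to avoid the median and argue directly: if $A:=Y_1\cap Y_2$ (a subcontinuum by the $n=2$ case) were disjoint from $Y_3$, pick $a\in A$, $c\in Y_3$, take $b\in(a,c)$ separating $A$ from $Y_3$; since $Y_1$ meets both $A$ and $Y_3$ it must contain $b$, and likewise $Y_2$, forcing $b\in A$, a contradiction. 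In (i), your claim that $C_y=A_y\cup\{z_y\}$ is connected is correct but the one-line hint could be expanded: a clopen $D\subsetneq C_y$ with $z_y\notin D$ satisfies $D\subseteq A_y$, hence $D=V\cap A_y$ for some open $V$ in $X$, so $D$ is open in $X$, and $D$ is closed in $X$ since $C_y$ is; thus $D$ is clopen in the connected space $X$, impossible. Finally, in (iii) you correctly flag the verification that $\preceq$ is total and that the order and subspace topologies on $[x,y]$ agree as the real work; this is exactly what Lemma~2.7 and Corollary~2.14 of van~Mill--Wattel provide, so your outline matches the sources the paper cites.
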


 We notice that $(i)$ and $(ii)$ are stated in \cite{dendron} as Corollary 2.15.1, and the proof of $(iii)$, as noted by Malyutin \cite{Maly}, can be obtained as a consequence of Lemma 2.7 combined with Corollary 2.14 of \cite{dendron}.

\section{main results on equicontinuity}\label{SMain-1}

 We start by stating our first main result.

 \begin{thm}\label{equicon}
 	Let $G$ be a group acting on a dendrite $X$ and assume that the action has a finite orbit. If $M$ is an
 	infinite minimal set of $G$ then the action of $G$ restricted to $M$ is equicontinuous.
 \end{thm}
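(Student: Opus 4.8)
The strategy is to analyze the structure of an infinite minimal set $M$ of a dendrite together with the finite orbit $F$, and to reduce equicontinuity on $M$ to equicontinuity on a well-behaved "base" factor. First I would fix a finite orbit $F = \Or{x_0}$ and consider its convex hull $[F]$, a subdendrite that is $G$-invariant (the convex hull operation commutes with homeomorphisms, and $[F]$ is a finite tree since $F$ is finite). Hence by the general theory $[F]$ contains only finitely many branch points and finitely many "free arcs"; the action of $G$ on $[F]$ permutes these finitely many arcs and branch points, so a finite-index subgroup $G_0$ of $G$ fixes each of them setwise and fixes each branch point. Since $M$ is minimal, passing to $G_0$ does not change $M$ (an infinite minimal set has no finite orbit, so $G_0$-minimality and $G$-minimality coincide here — or one argues directly that equicontinuity for a finite-index subgroup gives equicontinuity for $G$).

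Next I would locate $M$ relative to $[F]$. Using the canonical retraction $r_{[F]}\colon X\to [F]$, which is $G_0$-equivariant because $[F]$ is $G_0$-invariant (retractions onto invariant subcontinua are equivariant by the characterization of $[x,y]$ in Lemma \ref{Closed-fa} and the corresponding property in a dendrite), the image $r_{[F]}(M)$ is a minimal subset of $[F]$. Since $[F]$ is a finite tree on which $G_0$ fixes every branch point, any minimal subset of $[F]$ must sit inside a single closed free arc $I$ (a branch point would be a fixed point, giving a one-point minimal set, but then $M$ being infinite forces $r_{[F]}(M)$ to avoid branch points). So either $r_{[F]}(M)$ is a single fixed point $p\in [F]$, in which case $M$ lies in the subdendrite $X_p$ "hanging off" $p$ and we have reduced to a $G_0$-invariant subdendrite with a fixed endpoint; or $r_{[F]}(M)$ is an infinite minimal set inside an arc $I\cong[0,1]$, and then $G_0$ acts on $I$ by homeomorphisms fixing the endpoints, so it acts by order-preserving homeomorphisms of the interval. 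An infinite minimal set for an action on $[0,1]$ by orientation-preserving homeomorphisms is a Cantor set $C\subset (0,1)$, and — here is the crucial point — the action of a group of orientation-preserving homeomorphisms of the interval on such a minimal Cantor set is always equicontinuous. This is the interval analogue of the Margulis alternative and can be proved directly: collapsing the complementary gaps of $C$ gives a factor which is an action on $[0,1]$ with a dense orbit, and an action on the interval (or circle) by homeomorphisms that is minimal on a sub-Cantor set cannot be proximal because order is preserved — any two points keep a "point between them" that is tracked, so one builds an adapted metric (e.g. push forward Lebesgue measure on the quotient interval, pull back to get an invariant "length" on $C$) making every $T_g|_C$ an isometry.

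The heart of the argument is therefore twofold: (1) showing $M$ (or its retraction) lives on a single free arc or hangs off a single fixed point, which is where the finite-orbit hypothesis is used decisively — it is what produces the invariant finite tree and hence the fixed branch structure; and (2) upgrading "order-preserving action with an infinite minimal set on an interval" to "equicontinuous on that minimal set", which I expect to be the main obstacle to write cleanly. For step (2) the cleanest route is to produce a $G_0$-invariant metric on $M$: the complementary intervals of $C=r_{[F]}(M)$ in $I$ are permuted by $G_0$; assigning to the gap $(a,b)$ a mass and defining $\rho(x,y)$ as the total mass of gaps strictly between $x$ and $y$ (plus handling the "non-gap" part via the quotient) gives a metric equivalent to $d$ on which $G_0$ acts by isometries, yielding equicontinuity. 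In the "hanging off $p$" case, one observes that actually $M$ cannot genuinely hang off without $p$ itself generating structure — iterating the convex-hull argument with $[M\cup F]$ in place of $[F]$ — so the two cases are really one, and in all cases one ends up with an invariant metric on $M$. Finally, equicontinuity for $G_0$ on $M$ transfers to equicontinuity for $G$ on $M$ since $[G:G_0]<\infty$: $G = \bigcup_{i=1}^n g_iG_0$, and each $T_{g_i}$ is uniformly continuous on the compact set $M$, so the family $\{T_g|_M : g\in G\} = \bigcup_i T_{g_i}\circ\{T_h|_M : h\in G_0\}$ is uniformly equicontinuous.
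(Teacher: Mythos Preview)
Your dichotomy in steps (4)--(5) collapses. Once $G_0$ fixes both endpoints of a free arc $I\subset[F]$, it acts on $I\cong[0,1]$ by orientation-preserving homeomorphisms, and such an action has \emph{no} infinite minimal set whatsoever: for any closed $G_0$-invariant $C\subset I$, the point $\min C$ is necessarily $G_0$-fixed, so every minimal set in $I$ is a singleton. Hence the case you call ``the crucial point'' is vacuous, and you are always in the fixed-point case: $r_{[F]}$ sends (each $G_0$-minimal piece of) $M$ to a single fixed point $p$, so that $M_0$ lies in a subdendrite on which $G_0$ acts with a fixed point. But that is exactly the original hypothesis---a dendrite action with a finite orbit and an infinite minimal set---so nothing has been reduced. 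The suggestion to ``iterate with $[M\cup F]$'' does not help, since $[M\cup F]$ is no longer a finite tree and your branch-point argument no longer applies. Even setting this aside, you never explain why equicontinuity on the projected set $r_{[F]}(M)$ would lift to equicontinuity on $M$; the retraction can collapse large pieces of $[M]$.

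What the paper actually uses is a structural input you are missing. By Proposition~\ref{h} (proved in \cite{M-Nag}) there is an \emph{increasing} sequence of $G$-invariant finite trees $T_n\subset[M]$ whose endpoint sets $E(T_n)$ are finite $G$-orbits converging to $M$ in the Hausdorff metric. For each $a\in E(T_n)$, the piece $F^n(a)$ of $[M]$ hanging off $a$ away from $T_n$ satisfies $g(F^n(a))=F^n(g(a))$ because $T_n$ is $G$-invariant, and Lemma~\ref{arc} together with $E(T_n)\to M$ forces $\textrm{mesh}(F^n)\to 0$. These $F^n(a)$ are then neighborhoods in $[M]$ of points of $M$, permuted by $G$, with uniformly small diameter---which is equicontinuity. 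The essential difference from your plan is that one needs \emph{many} invariant finite trees exhausting $[M]$, not a single fixed tree $[F]$; retracting once onto $[F]$ discards precisely the fine structure near $M$ that equicontinuity concerns.
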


Theorem \ref{equicon} can be augmented as follows.

\begin{thm}\label{equiconlocalden}
Let $G$ be a group acting on a local dendrite $X$ and assume that the action has a finite orbit. If $M$ is an
infinite minimal set of $G$ then the action of $G$ restricted to $M$ is equicontinuous.
\end{thm}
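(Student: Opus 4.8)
The plan is to reduce Theorem~\ref{equiconlocalden} to the dendrite case, Theorem~\ref{equicon}, by quotienting out the subgraph of $X$ that carries all of its circles. If $X$ contains no circle it is already a dendrite and there is nothing to do, so assume $X$ contains at least one circle and let $\Gamma_X$ be the minimal graph containing every circle of $X$. Since a homeomorphism of $X$ permutes the (finite) set of circles of $X$, the graph $\Gamma_X$ is strongly $G$-invariant; hence the action descends to the quotient $\pi\colon X\to X'\egdef X/\Gamma_X$ collapsing $\Gamma_X$ to a single point $\ast$. One checks, along the lines of the discussion preceding the statement, that $X'$ is a locally connected metrizable continuum with no simple closed curve, i.e.\ a dendrite, that $\pi$ is $G$-equivariant, and that $\ast$ is a $G$-fixed point of $X'$; in particular $(G,X')$ again has a finite orbit. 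One also has at hand the $G$-equivariant retraction $r\colon X\to\Gamma_X$ (the identity on $\Gamma_X$, and constant equal to the attaching point on each component of $X\setminus\Gamma_X$), which will serve to transport onto $\Gamma_X$ the invariant probability measure carried by the finite orbit.

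I would then split according to whether $M$ meets $\Gamma_X$. If $M\cap\Gamma_X=\emptyset$ then, $\Gamma_X$ being closed and $G$-invariant and $M$ minimal, $M\subseteq X\setminus\Gamma_X$; since $\pi$ is injective off $\Gamma_X$ and $M$ is compact, $\pi|_M$ is a $G$-equivariant homeomorphism of $M$ onto $\pi(M)$, and $\pi(M)$ is an infinite minimal set of the dendrite system $(G,X')$. Theorem~\ref{equicon} then yields that $G$ acts equicontinuously on $\pi(M)$, and transporting back through the equivariant homeomorphism $\pi|_M$ gives equicontinuity of the $G$-action on $M$. This settles the case where the minimal set lives in the ``dendritic part'' of $X$.

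It remains to treat $M\cap\Gamma_X\neq\emptyset$, in which minimality forces $M\subseteq\Gamma_X$, so that $M$ is an infinite minimal set of the action of $G$ on the graph $\Gamma_X$. Here I would invoke the classification of minimal sets of group actions on graphs from \cite{M-Nag2}: $M$ is then either a finite union of circles cyclically permuted by $G$, or a Cantor set. In the first case the induced action on $M$ is, by the classification together with the Margulis dichotomy and the invariant measure furnished by the finite orbit (which rules out, via Lemma~\ref{BouT}, the strongly proximal alternative), conjugate to an action by rotations, hence equicontinuous. In the Cantor case one works directly on $\Gamma_X$, essentially repeating the argument of Theorem~\ref{equicon}: the finitely many circles decompose $\Gamma_X$ along its cut points into finitely many blocks, $G$ permutes them, and after replacing $G$ by a finite-index subgroup $G'$ fixing one block $\beta$ one is reduced to a Cantor minimal set inside $\beta$; the cut points and branch points of $\beta$ form a finite $G'$-invariant set, so $(G',\beta)$ again has a finite orbit, and one codes the points of the Cantor set by their itineraries through the finitely many edges of $\beta$, using the fixed arc (equivalently, the invariant measure) supplied by the finite orbit to bound the modulus of continuity; finally $[G:G']<\infty$ propagates equicontinuity of $G'$ on $M\cap\beta$ back to equicontinuity of $G$ on $M$.

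The verifications that $X/\Gamma_X$ is a dendrite and that $\pi$ and $r$ are equivariant, together with the finite-index bookkeeping, are routine. The real obstacle is the last case, $M\subseteq\Gamma_X$ a Cantor set: in contrast with the case $M\cap\Gamma_X=\emptyset$, there is no collapsing that makes the ambient space a dendrite while preserving both $M$ and the action --- the ``gaps'' of $M$ along a circle of $\Gamma_X$ are permuted by $G$ with, in general, no finite invariant selection --- so this case must be handled on the graph itself, which is precisely where the fine description of the minimal set and of how the finite orbit sits inside its convex hull (as used for Theorem~\ref{equicon}) is needed.
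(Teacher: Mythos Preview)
Your reduction in the case $M\cap\Gamma_X=\emptyset$ is correct and is essentially the paper's own argument. The paper collapses only the attaching points $\{z_1,\dots,z_k\}$ of the finitely many components of $X\setminus\Gamma_X$ meeting $M$, rather than all of $\Gamma_X$; but either quotient is a dendrite carrying the induced $G$-action with a fixed point, $\pi|_M$ is an equivariant homeomorphism onto an infinite minimal set, and Theorem~\ref{equicon} applies. So this part is fine.

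The gap is in your Case~2, $M\subseteq\Gamma_X$, but it is not the obstacle you describe: that case is \emph{vacuous}. You already have the equivariant retraction $r\colon X\to\Gamma_X$; pushing the given finite orbit through $r$ yields a finite orbit for the restricted action on $\Gamma_X$ (this is exactly the first paragraph of the paper's proof). Now invoke Lemma~\ref{graph-finite}: on a graph different from a circle every minimal set is finite, with no further hypothesis. If $\Gamma_X$ happens to be a circle, invoke Lemma~\ref{circle-finite}: on a circle admitting a finite orbit every minimal set is finite. Either way, no infinite minimal set can lie inside $\Gamma_X$, and only your Case~1 survives.

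Your direct attack on Case~2 would not stand as written even if the case occurred. The classification you quote is off: by Lemma~\ref{graph-finite} a group action on a non-circle graph has only finite minimal sets, so there is no ``Cantor minimal set inside a block $\beta$'' to code by itineraries, and no ``finite union of circles cyclically permuted'' arising as a minimal set. And for the putative whole-circle case, the invariant measure you transport from the finite orbit is atomic, so its support is itself a finite orbit on the circle, which already contradicts minimality of $M=S^1$; one does not get to the rotation-conjugacy step at all. In short, once you cite Lemmas~\ref{circle-finite} and~\ref{graph-finite} the proof is complete after your Case~1, and the last two paragraphs of your proposal can be deleted.
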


Our second main result is as follows.

\begin{thm}\label{noamenable-equicon}
There exists a class of non-amenable groups for which the action on dendrite has a finite orbit. Therefore, the restricted action to any minimal set is equicontinuous and infinite minimal sets are Cantor sets.
\end{thm}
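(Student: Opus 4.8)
The plan is to produce, for each group $G$ in a suitable class, a dendrite $X$ carrying a $G$-action by homeomorphisms with a global fixed point, and then to read off the remaining two conclusions from Theorem \ref{equicon} and from the classification of the minimal sets of group actions on dendrites \cite{M-Nag}. For the class I would take $\mathcal{C}$ to be the collection of countably infinite, residually finite, non-amenable groups; this is large, containing the free groups $F_{k}$ with $k\ge2$, the groups $\mathrm{SL}_{n}(\Z)$ with $n\ge2$, the fundamental groups of closed surfaces of genus at least $2$, and more generally every non-amenable finitely generated linear group (residually finite by Malcev). Fix $G\in\mathcal{C}$ and choose a strictly decreasing chain of finite-index normal subgroups $G=\Gamma_{0}\supsetneq\Gamma_{1}\supsetneq\Gamma_{2}\supsetneq\cdots$ with $\bigcap_{n}\Gamma_{n}=\{1\}$ and $[\Gamma_{n}:\Gamma_{n+1}]\ge2$ for every $n$; such a chain exists because $G$ is countable and residually finite.

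Next I would form the coset tree $T$ of this chain: its level-$n$ vertices are the cosets in $G/\Gamma_{n}$, the root $v_{0}$ is the unique level-$0$ vertex, and the parent of $g\Gamma_{n+1}$ is $g\Gamma_{n}$ (well defined since $\Gamma_{n+1}\subseteq\Gamma_{n}$). Since $[\Gamma_{n}:\Gamma_{n+1}]\ge2$, every vertex has at least two children, so $T$ is a rooted tree with finite levels and no leaves in which branching occurs along every ray. Realize $T$ geometrically, giving each edge between level $n$ and level $n+1$ the length $2^{-n}$, and let $X$ be the metric completion of the realization. Verifying that $X$ is a dendrite is the only genuinely technical point, but it is exactly the classical construction underlying the Gehman dendrite: $X$ is compact (each level of $T$ is finite and the edge lengths along any ray are summable) and connected, it contains no simple closed curve because $T$ is a tree, and it is locally connected — at a vertex one has the neighbourhood basis of finite stars of short edges, and at an end the neighbourhood basis of closed subtrees hanging below the successive vertices along the corresponding ray, each of which is connected. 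Writing $\partial T$ for the set of ends of $T$, one gets $E(X)=\partial T$ and $B(X)\subseteq V(T)$ countable; moreover $\partial T$ is a nonempty compact metric space that is totally disconnected and has no isolated point (branching occurs along every ray), hence a Cantor set.

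Now $G$ acts: left translation on each coset space $G/\Gamma_{n}$ is compatible with the parent maps and fixes $v_{0}$, and so defines a homomorphism $G\to\mathrm{Aut}(T)$, injective because $\bigcap_{n}\Gamma_{n}=\{1\}$. Each tree automorphism is a level-preserving simplicial bijection, hence extends to an isometry of the realization and then to a homeomorphism of its completion $X$, fixing $v_{0}$. This gives a faithful action of $G$ on the dendrite $X$ whose orbit of $v_{0}$ is $\{v_{0}\}$, so the action has a finite orbit. Furthermore $G$ acts transitively on each (finite) level $G/\Gamma_{n}$, hence every $G$-orbit in $\partial T=E(X)$ is dense — given ends $\xi,\eta$ and the cylinder of $\eta$ determined by a level-$n$ vertex $v$, transitivity on level $n$ sends the level-$n$ vertex of $\xi$ to $v$ — so $\partial T$ is an infinite minimal set, and by the previous paragraph it is a Cantor set.

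It remains to collect the consequences: $X$ is a dendrite and the $G$-action has a finite orbit, so Theorem \ref{equicon} gives that the restriction of $G$ to any infinite minimal set is equicontinuous, while by the classification of the minimal sets of group actions on dendrites \cite{M-Nag} every such set is a Cantor set (restriction to a finite minimal set being trivially equicontinuous); letting $G$ range over $\mathcal{C}$, with the associated dendrites, furnishes the asserted class. The point I expect to require the most care is not a computation but the conceptual choice of examples: the action constructed above factors through $\mathrm{Aut}(T)$, which is profinite and therefore amenable \emph{as a topological group}, yet this imposes no amenability on the discrete group $G$ — just as $F_{k}$ embeds in its profinite completion as a non-amenable discrete subgroup. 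The construction succeeds precisely because the converse of the Shi--Ye implication ``amenable $\Rightarrow$ finite orbit'' is false, and exhibiting that failure is the entire content of the statement.
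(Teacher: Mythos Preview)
Your construction is correct and carefully executed: the coset-tree completion is indeed a dendrite, the left-translation action is faithful, fixes the root, and has $\partial T$ as a Cantor minimal set. But you have proved a different, and much weaker, statement than the one the paper intends.

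The paper reads the theorem as: there is a class of non-amenable groups $G$ such that \emph{every} action of $G$ on \emph{any} nondegenerate dendrite has a finite orbit. Its proof takes for this class the non-amenable groups containing no non-abelian free subgroup --- Tarski monsters, free Burnside groups of large odd exponent, Monod's piecewise-projective examples --- and then invokes Theorem~\ref{Shi}(\ref{three}): if an action on a dendron has no finite orbit, the acting group must contain a non-abelian free subgroup. By contraposition, every action of such a $G$ on every dendrite has a finite orbit, and Theorem~\ref{equicon} together with Lemma~\ref{Infinit-mi} then give the equicontinuity and Cantor-set conclusions for \emph{all} minimal subsets of \emph{all} such actions. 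This is precisely what is packaged as Lemmas~\ref{orbitfinie} and~\ref{keylemma1}.

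Your class $\mathcal{C}$ cannot serve this purpose, because it contains the free groups $\F_k$ and $\mathrm{SL}_n(\Z)$, and these \emph{do} admit actions on dendrites with no finite orbit (strongly proximal ones, as guaranteed by Theorem~\ref{Shi}). What you establish is only that each $G\in\mathcal{C}$ has \emph{some} action on \emph{some} dendrite with a fixed point --- but that is true of every group whatsoever, already via the trivial action on $[0,1]$. Your construction does add a faithful action with an infinite Cantor minimal set, which is pleasant, but it is not the content of the theorem. The conceptual point you isolate at the end --- that the converse of ``amenable $\Rightarrow$ finite orbit'' fails --- is exactly the right target; the paper, however, locates that failure in the von~Neumann counterexamples, where the absence of free subgroups \emph{forces} a finite orbit in every dendrite action, not merely in a hand-built one.
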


For the proof of our main results, we need the following.

\begin{prop}[\cite{M-Nag}]\label{pr4proc} Let $X$ be a dendrite, a group $G$ act on $X$ and $M$ be a minimal set of $G$. Assume that the action of $G$ has a finite orbit. Then,
 \begin{enumerate}[(i)]
\item$M$ is the set of endpoints of $[M]$.
\item For any point $a\in X$ in a finite orbit, the point $r_{[M]}(a)$ is in a finite orbit.
\item $[M]$ contains at least one finite orbit consisting of one or two points.
\item If $M$ is infinite, then it is the only infinite minimal subset in $[M]$.
\end{enumerate}
\end{prop}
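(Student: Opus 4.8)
The plan is to prove the four assertions in the order (ii), (iii), (i), (iv), since the existence of a finite orbit \emph{inside} $[M]$, obtained in (ii), is what drives the remaining parts. First I record that $[M]$ is strongly $G$-invariant: as $M$ is minimal, $gM\subseteq M$ and $g^{-1}M\subseteq M$ force $gM=M$, and since each $g$ is a homeomorphism carrying arcs to arcs, $g[M]=[gM]=[M]$. The canonical retraction $r:=r_{[M]}$ is characterized by the fact that $r(x)$ is the first point of $[M]$ met by any arc issued from $x$; because every $g$ is a homeomorphism with $g[M]=[M]$, this characterization is preserved and yields the equivariance $r\circ g=g\circ r$. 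Hence $r(\Or{a})=\Or{r(a)}$, so a finite orbit $\Or{a}$ is carried to a finite orbit $\Or{r(a)}\subseteq[M]$, which is exactly (ii).

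For (iii), take $a\in X$ with finite orbit (the hypothesis) and set $F:=\Or{r(a)}\subseteq[M]$, a finite orbit by (ii). Its convex hull $T:=[F]$ is a finite topological tree with $gT=[gF]=[F]=T$, so $G$ acts on $T$ by homeomorphisms; since order is a topological invariant, $G$ permutes the finite vertex set $V:=F\cup B(T)$ and the induced edges, acting by automorphisms of the finite combinatorial tree $(V,E)$. The (combinatorial) centre of a finite tree is a single vertex or a single edge and is preserved by every automorphism: a central vertex is a global fixed point, and the two endpoints of a central edge form a $G$-invariant set of one or two points. In all cases $[M]$ carries a $G$-orbit of one or two points, giving (iii); I will write $F_{0}\subseteq[M]$ for this orbit.

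For (i), one inclusion is elementary: if $e\in E([M])$ were not in $M$ then, writing $e\in[y,z]$ with $y,z\in M$ and using $e\neq y,z$, the point $e$ would separate $y$ from $z$ in $[M]$, forcing $\textrm{ord}_{[M]}(e)\geq 2$ and contradicting $e\in E([M])$; thus $E([M])\subseteq M$. As $E([M])$ is nonempty and $G$-invariant, $\overline{E([M])}$ is a nonempty closed $G$-invariant subset of the minimal set $M$, so $\overline{E([M])}=M$ and the endpoints are dense in $M$. For the reverse inclusion I would use $F_{0}$ as a basepoint $p$ to set up a betweenness (rooted-tree) order $u\preceq v\iff u\in[p,v]$ on $[M]$, for which $[M]=\bigcup_{m\in M}[p,m]$ and $E([M])$ is precisely the set of $\preceq$-maximal points; a non-endpoint $x\in M$ would then produce $m\in M$ with $x\prec m$, and the technical heart is to exclude this using the almost periodicity of $x$ in the minimal system $M$ together with the finite-orbit anchor. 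When (iii) returns an inverted central edge rather than a fixed point, one roots instead along the invariant central arc. This reverse inclusion is the first main obstacle.

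For (iv), suppose $N\subseteq[M]$ is infinite minimal with $N\neq M$; being distinct minimal sets, $N\cap M=\emptyset$. By (i), $M=E([M])$, so each $n\in N$ is a non-endpoint of $[M]$, whence $\textrm{ord}_{[M]}(n)\geq 2$; one checks $M\cap[N]=\emptyset$ and $[N]\subsetneq[M]$. Apply the equivariant gate retraction $r_{[N]}:X\to[N]$. Since $N$ is minimal, each $n\in N=E([N])$ has exactly one direction into $[N]$ and at least one further direction in $[M]$, along which an endpoint $m\in M$ of $[M]$ satisfies $[n,m]\cap[N]=\{n\}$, so $r_{[N]}(m)=n$; hence $r_{[N]}(M)\supseteq N$, and as $r_{[N]}(M)$ is a minimal set (the equivariant continuous image of $M$) this gives $r_{[N]}(M)=N$, exhibiting $N$ as a factor of $M$. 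Moreover $r_{[N]}(F_{0})$ is a finite $G$-orbit in $[N]$; if it met $N$ it would be a finite orbit inside the infinite minimal set $N$, which is impossible, so $r_{[N]}(F_{0})\subseteq[N]\setminus N$. The remaining task is to contradict this hanging configuration, and this is the second main obstacle: at the purely order-theoretic and topological level the configuration is consistent, so it must be excluded dynamically, using the minimality of $M$ together with the finite-orbit hypothesis (equivalently, the attached invariant measure). I expect this last step to be the hardest point of the proof.
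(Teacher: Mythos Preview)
First, note that the paper does not prove this proposition: it is imported from \cite{M-Nag} and stated without argument, serving only as input for Theorem~\ref{equicon} and Proposition~\ref{h}. There is therefore no proof in the present paper to compare your proposal against, and your attempt must be judged on its own.

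Your treatments of (ii) and (iii) are complete and correct: equivariance of the first-point retraction $r_{[M]}$ gives (ii) immediately, and the centre-of-tree argument on $[F]$ for (iii) is the standard one. By contrast, (i) and (iv) are not proved. For (i) you establish only the inclusion $E([M])\subseteq M$ and hence, by minimality, that endpoints are \emph{dense} in $M$; the rooted partial order $\preceq$ and the phrase ``almost periodicity of $x$ together with the finite-orbit anchor'' describe a setting but supply no mechanism that excludes a cut point $x\in M$ with $x\prec m$ for some $m\in M$. Nothing you have written uses the hypothesis in a way that distinguishes an infinite minimal $M$ with $M=E([M])$ from one with $M\supsetneq E([M])$, so the reverse inclusion remains open.

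For (iv) the gap is of the same nature. Your reduction is correct as far as it goes: $M\cap[N]=\emptyset$, the equivariant retraction gives a factor map $r_{[N]}|_M:M\to N$, and $r_{[N]}(F_0)$ is a finite orbit in $[N]\setminus N$. But, as you yourself observe, this merely reproduces inside $[N]$ exactly the hypotheses you had in $[M]$: an infinite minimal set, its convex hull, and a small finite orbit inside it. There is no descent (no numerical or order-type invariant that strictly decreases), so the construction does not terminate in a contradiction. The ``hanging configuration'' you describe is not self-contradictory at the level of your argument, and the appeal to an attached invariant measure is not turned into a concrete step. In summary, two of the four assertions --- precisely the substantive ones --- remain unproved, and the proposal is incomplete rather than merely sketchy.
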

Moreover, we have the following.

\begin{prop}[\cite{M-Nag}]\label{h}Let $M$ be a minimal set of $G$. If $M$ is infinite, then there exists a sequence $(T_n)_{n\in\mathbb{N}}$ of sub-trees in $[M]$ satisfying the following properties:
\begin{enumerate}[(i)]
\item $T_n\subset T_{n+1}$, $\forall n\in\mathbb{N}$.
\item $E(T_n)\subset B([M])$ is a finite orbit, $\forall n\in\mathbb{N}$.
\item $[M]=\overline{\bigcup_{n\in\mathbb{N}} T_n}$.
\item $\underset{n\to+\infty}\lim E(T_n) = M$ (in the Hausdorff metric).
\end{enumerate}
\end{prop}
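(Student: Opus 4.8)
The plan is to take for $T_n$ the convex hull of the point set consisting of the branch points of $[M]$ lying at combinatorial ``depth'' at most $n$ from a suitable $G$-fixed point. The first step is to record the structural facts that make this work. Working in the standing situation where the action has a finite orbit, Proposition~\ref{pr4proc}(i) gives that $[M]$ is a $G$-invariant subdendrite with $E([M])=M$. Since $M$ is infinite and minimal it has no isolated points, and since a connected subset of a dendrite contains an arc whereas $E([M])$ contains none, $M$ is totally disconnected; thus $M$ is a Cantor set and, in particular, $E([M])$ is closed. By the result of \cite{Ar} quoted in Section~\ref{tool}, $B([M])$ is then discrete, and it is infinite because a dendrite with finitely many branch points has finitely many endpoints. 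Moreover every point $x\in[M]\setminus M$ has \emph{finite} order: if $\mathrm{ord}_{[M]}(x)=\aleph_0$, then by local connectedness the branches of $[M]$ at $x$ shrink to $x$ while each carries a point of $M$ (its closure being a nondegenerate subdendrite), so points of $M$ accumulate at $x$, forcing $x\in\overline{M}=M$, a contradiction; a similar but longer argument (using the discreteness of $B([M])$, continuity of the arc map, Lemma~\ref{arc}, and the fact that near an order-$1$ point $[M]$ is a short arc) shows that branch points of $[M]$ can accumulate only at points of $M$, and dually that branch points cluster at every point of $M$.

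Next I would normalise the base point. By Proposition~\ref{pr4proc}(iii) the set $[M]$ contains a finite orbit of one or two points; replacing $G$ if necessary by the subgroup of index $\le 2$ fixing each of them (which affects neither the hypothesis ``finite orbit'' nor, up to a bounded factor, the sets constructed below), I may assume $G$ fixes a point $c\in[M]$, and $c\notin M$ since $M$ is infinite and minimal. For $b\in B([M])$ the arc $[c,b]$ meets $M$ at most at its endpoints, hence nowhere; since its branch points can accumulate only in $M$ and $B([M])$ is discrete, $[c,b]$ contains only finitely many branch points. Let $\mathrm{ht}(b)$ be their number and $L_j:=\{b\in B([M]):\mathrm{ht}(b)=j\}$. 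Because $c$ is $G$-fixed and $G$ preserves arcs and $B([M])$, each $L_j$ is $G$-invariant; and $L_{j+1}$ is exactly the set of ``children'' of the points of $L_j$ in the branch-point tree rooted at $c$, each $b\in L_j$ having at most $\mathrm{ord}_{[M]}(b)-1<\infty$ of them, so by induction every $L_j$ is finite, and $B([M])=\bigsqcup_{j\ge 1}L_j$ with all $L_j$ nonempty.

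Now I would set $T_n:=[\{c\}\cup L_1\cup\dots\cup L_n]$: this is the convex hull of a finite $G$-invariant set, hence a finite $G$-invariant subtree, and $T_n\subset T_{n+1}$ by monotonicity of the hull, giving (i). A point of $L_j$ with $j<n$ has a child in $L_{j+1}\subset L_1\cup\dots\cup L_n$ lying beyond it relative to $c$, so it is not an extreme point of the generating set, whereas a point of $L_n$ has all of the generating set on its $c$-side and so is an endpoint of $T_n$; hence $E(T_n)=L_n\subset B([M])$, a finite $G$-invariant set, giving (ii). Since $\bigcup_nT_n\supseteq\bigcup_j L_j=B([M])$ and $\overline{B([M])}\supseteq M$, the set $\overline{\bigcup_nT_n}$ is a subcontinuum containing $M$, hence equals $[M]$, giving (iii).

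For (iv), write $D_b$ for the set of points of $[M]$ lying strictly beyond $b$ relative to $c$, so that $\overline{D_b}=D_b\cup\{b\}$ is a subcontinuum meeting $M$; for $b\in E(T_n)=L_n$ one has $d(b,M)\le\diam(\overline{D_b})$, and for $m\in M$ the $n$-th branch point $p$ on the arc $[c,m]$ lies in $L_n$ with $m\in D_p$, so $d(m,L_n)\le\diam(\overline{D_p})$. Thus it suffices to prove the uniform estimate $\sup_{b\in L_n}\diam(\overline{D_b})\to 0$. Along any descending chain $b^{(1)},b^{(2)},\dots$ of branch points the continua $\overline{D_{b^{(j)}}}$ decrease to a connected subset of $M$, hence to a point, so their diameters tend to $0$; and a branch $\overline{D_b}$ of diameter $\ge\eps$ recurring along an antichain of unbounded depth would, after passing to a Hausdorff limit in the hyperspace, produce a nondegenerate continuum attached to $[M]$ at the (order-$\le 2$, non-$M$) accumulation point of the corresponding roots, which is impossible — such a limiting picture would force either a point of order $\ge 3$ where $B([M])$ is discrete, or a simple closed curve, or two disjoint arcs spanning a common small arc near an endpoint of $[M]$. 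Hence for each $\eps$ only finitely many branch points give branches of diameter $\ge\eps$, and the uniform bound follows. The main obstacles are precisely the finite-order claim for branch points (together with ``branch points accumulate only on $M$'') and this last uniform-shrinking estimate: these are where the Cantor/closedness structure of $M$ is genuinely used, and the rest is bookkeeping on the rooted branch-point tree.
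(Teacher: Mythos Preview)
The paper does not prove this proposition; it is imported from \cite{M-Nag} and only used (via the $G$-invariance of the trees $T_n$) in the proof of Theorem~\ref{equicon}. So there is no ``paper's own proof'' to compare against, and I can only assess your argument on its merits.

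Your overall strategy --- stratify $B([M])$ by combinatorial depth from a $G$-fixed base and take convex hulls --- is sound and does yield a sequence satisfying (i), (iii), (iv). Two points deserve attention. First, the passage to an index-$\le 2$ subgroup is not innocuous: the levels $L_j$ you construct are invariant only under the subgroup $H$ fixing $c$, so the trees $T_n$ need not be $G$-invariant, and the vague phrase ``up to a bounded factor'' does not repair this. The clean fix is to keep $G$ and take as root the $G$-invariant arc $[c_1,c_2]$ (or the single fixed point, if there is one), defining $\mathrm{ht}(b)$ as the number of branch points on $[r_{[c_1,c_2]}(b),b]$; then each $L_j$ is genuinely $G$-invariant. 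Second, you only show that $E(T_n)=L_n$ is a finite $G$-\emph{invariant} subset of $B([M])$, not a single $G$-orbit; nothing in your argument forces $G$ to act transitively on the level sets. If one reads (ii) literally as ``$E(T_n)$ is a finite orbit'', this is a gap. That said, the application in the proof of Theorem~\ref{equicon} uses only $g(T_n)=T_n$, for which $G$-invariance of $E(T_n)$ suffices.

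The structural claims you flag as ``main obstacles'' are correct and can be made rigorous with less effort than your sketch suggests. That $[c,b]$ carries only finitely many branch points (for $c,b\notin M$) follows from: if $b_k\in[c,b]\cap B([M])$ accumulate at $y\in[c,b]$, pick pairwise disjoint ``side branches'' $\overline{D_k}$ at the $b_k$, each containing some $e_k\in M$; a Hausdorff-limit argument (or simply choosing $r\in(y,e)$ for a subsequential limit $e$ of the $e_k$) forces $r$ to lie in infinitely many of the disjoint $\overline{D_k}$, a contradiction. For (iv), rather than the chain/antichain dichotomy, it is cleaner to argue directly: if $\mathrm{diam}(\overline{D_{b_k}})\ge\eps$ along some sequence $b_k\in B([M])$, pass to a Hausdorff limit $K$; then $b_k\to z\in M$ (branch points accumulate only on $M$), $z\in K$, and any $w\in K$ satisfies $z\in[c,w]$ by continuity of the arc map, forcing $w=z$ since $z\in E([M])$; hence $K=\{z\}$, contradicting $\mathrm{diam}(K)\ge\eps$.
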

We need also the following results.


\begin{lem}[\cite{M-Nag}]\label{Infinit-mi}Let a group $G$ act on a dendrite $X$. Suppose that the action has a finite orbit.  Then the minimal set $M$ is either a finite orbit or a Cantor set.
\end{lem}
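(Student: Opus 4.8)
The plan is to verify that an infinite minimal set $M$ satisfies the standard topological characterization of the Cantor set: nonempty, compact, metrizable, perfect and totally disconnected. Nonemptiness is part of the definition of a minimal set, compactness holds because $M$ is closed in the compact space $X$, and metrizability is inherited from $X$. If $M$ is finite, choose $x\in M$; then $\Or{x}$ is a nonempty, closed (being finite) and $G$-invariant subset of $M$, so $M=\Or{x}$ by minimality and $M$ is a finite orbit. Hence it suffices to treat the infinite case and to show that $M$ is then perfect and totally disconnected.

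For perfectness, observe that $T_g(M)=M$ for every $g\in G$: from $T_{g^{-1}}(M)\subseteq M$ and injectivity of $T_g$ one gets $M\subseteq T_g(M)$, while $T_g(M)\subseteq M$ by $G$-invariance. Thus each $T_g$ restricts to a homeomorphism of $M$, and since homeomorphisms carry isolated points to isolated points, the set $I$ of isolated points of $M$ is $G$-invariant; it is also open in $M$. Consequently $M\setminus I$ is closed and $G$-invariant, so by minimality it is either empty or all of $M$. If $M\setminus I=\emptyset$, then $M$ is discrete and compact, hence finite, contrary to our assumption; therefore $I=\emptyset$ and $M$ has no isolated points.

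For total disconnectedness I would appeal to Proposition~\ref{pr4proc}(i): since the action has a finite orbit, the infinite minimal set $M$ equals $E([M])$, the set of endpoints of its convex hull $[M]$, which is a subcontinuum of $X$ and hence a dendrite. If some connected subset $C\subseteq M$ contained two distinct points $a,b$, then, since a connected subset of a dendrite is arcwise connected and arcs in dendrites are unique, $[a,b]\subseteq C\subseteq E([M])$; but any $c\in(a,b)$ has $\textrm{ord}_{[M]}(c)\ge 2$, so $c\notin E([M])$, a contradiction. Hence the connected components of $M$ are singletons, $M$ is totally disconnected, and combined with the previous paragraph, $M$ is a Cantor set.

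The only step carrying genuine weight is the identification $M=E([M])$ supplied by Proposition~\ref{pr4proc}(i); once that is granted, the rest (perfectness from minimality, and the elementary fact that the endpoint set of a dendrite is totally disconnected) is routine. Were one to prove the lemma from scratch, this identification --- locating the minimal set precisely as the endpoint set of its convex hull --- would be the main obstacle.
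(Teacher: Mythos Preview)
The paper does not supply a proof of this lemma; it is quoted verbatim from \cite{M-Nag} (as is Proposition~\ref{pr4proc}), so there is no in-paper argument to compare against. Your proof is correct and is essentially the natural one: verify the Brouwer characterization of the Cantor set, with perfectness coming from minimality and total disconnectedness from the identification $M=E([M])$ in Proposition~\ref{pr4proc}(i). Your remark that this identification is the only substantive input is accurate; note also that the perfectness clause is already packaged in Lemma~\ref{G-continuum} (option~(3)), so one could shorten your argument by invoking that lemma and then handling only total disconnectedness.

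One caution worth recording: since both Proposition~\ref{pr4proc} and Lemma~\ref{Infinit-mi} are imported from \cite{M-Nag}, you should check that in the source paper the proof of $M=E([M])$ does not itself rely on the Cantor-set conclusion, or your derivation would be circular. In fact it does not --- the inclusion $M\subset E([M])$ is obtained there by a direct cut-point argument (a non-endpoint of $[M]$ would separate $[M]$ into pieces whose $M$-traces give a nontrivial closed invariant proper subset), and $E([M])\subset \overline{M}=M$ follows from $[M]=\bigcup_{x,y\in M}[x,y]$ --- so your use of Proposition~\ref{pr4proc}(i) as a black box is legitimate.
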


\begin{lem}[\cite{M-Nag}]\label{G-continuum} Let $G$ be a group acting on a continuum $X$ and let $M$ be a minimal set. Then, $M$ is
	\begin{enumerate}
\item a finite orbit, or
\item X, that is, the action is minimal, or
\item a $G$-invariant, compact perfect nowhere dense subset of $X$.		
	\end{enumerate}
\end{lem}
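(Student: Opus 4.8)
The plan is to prove the trichotomy by first deciding whether $M$ is perfect or discrete, and then, in the perfect case, using the connectedness of $X$ to decide between $M = X$ and $M$ being nowhere dense.

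First I would record that $X$, being a continuum, is compact, and that by definition $M$ is a nonempty closed $G$-invariant set, minimal for inclusion among such sets. Since every $T_g$ is a homeomorphism of $X$, it carries isolated points of $M$ onto isolated points of $M$; hence the set $D$ of points isolated in $M$ is $G$-invariant. As $D$ is open in $M$, the set $M\setminus D$ is closed in $M$, hence closed in $X$, and it is $G$-invariant. Minimality of $M$ then forces either $M\setminus D=M$ (so $D=\emptyset$ and $M$ is perfect) or $M\setminus D=\emptyset$ (so $D=M$ and $M$ is discrete).

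If $M$ is discrete, it is a compact discrete space, hence finite; and for any $x\in M$ the orbit $\Or{x}$ is a nonempty, finite (hence closed), $G$-invariant subset of $M$, so minimality gives $\Or{x}=M$, which is alternative (1). If $M$ is perfect and $M=X$, the action is minimal and we are in alternative (2). If $M$ is perfect and $M\subsetneq X$, then $M$ is a proper nonempty closed subset of the connected space $X$, so $M$ is not open, i.e. its boundary $\partial M=M\setminus\mathrm{int}_X(M)$ is nonempty and closed. The interior $\mathrm{int}_X(M)$ is $G$-invariant because the $T_g$ are homeomorphisms, hence so is $\partial M$. Were $\mathrm{int}_X(M)$ nonempty, $\partial M$ would be a nonempty proper closed $G$-invariant subset of $M$, contradicting minimality; hence $\mathrm{int}_X(M)=\emptyset$, so the closed set $M$ is nowhere dense. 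Being closed in the compact $X$ it is compact, and it is perfect and $G$-invariant, which is alternative (3).

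The only delicate point is the perfect/discrete dichotomy: one cannot apply minimality to the set $D$ of isolated points directly, since $D$ is merely open in $M$, so one must pass to its complement inside $M$ to obtain a genuinely closed $G$-invariant set. Everything else is routine, the key external input being that in a connected space a proper nonempty closed set has nonempty boundary.
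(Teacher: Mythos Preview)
Your argument is correct. The paper does not supply its own proof of this lemma: it is stated with the attribution \cite{M-Nag} and no argument is given in the text. What you have written is the standard elementary proof of this classical trichotomy, and all the steps are sound. In particular, your handling of the perfect/discrete dichotomy via the closed $G$-invariant complement $M\setminus D$ is the right way around the fact that the set of isolated points need not itself be closed, and the use of connectedness of $X$ to force empty interior in case (3) is exactly the point where the continuum hypothesis on $X$ enters.
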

For the case of the dendrites, we need the following results from \cite{M-Nad},
and \cite{Gla-M} which we state in the more general setting. For sake of completeness, we present the proof.

\begin{thm}\label{Shi} Let $G$ acts on a dendron $X$ and assume that the action has no finite orbit. Then
	
\begin{enumerate}
	\item \label{one} There is a unique infinite minimal set $M \subset X.$
	\item \label{two} The action of $G$ on $M$ is strongly proximal.
	\item \label{three} $G$ contains a non-abelian free group on two generators.
\end{enumerate}
\end{thm}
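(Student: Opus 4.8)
The plan is to prove these three assertions as the dendron counterpart of Margulis's alternative for group actions on the circle \cite{Marg}; in the metric (dendrite) case they are contained in \cite{M-Nag}, and the idea is to transport that argument to dendrons, using the order-theoretic Lemma \ref{Closed-fa} in place of the metric estimates of \cite{M-Nag}. The key statement is \eqref{two}: I would deduce \eqref{one} from it via Lemma \ref{BouT}, and then obtain \eqref{three} by a ping-pong argument.

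\textbf{Reduction of \eqref{one} and \eqref{two}.} First I would reduce both to the single claim:
\begin{center}
$(\star)$ \ every minimal closed $G$-invariant subset of $\Pa(X)$ consists of Dirac measures.
\end{center}
Granting $(\star)$: fix $\mu\in\Pa(X)$; the compact set $\overline{\Or{\mu}}\subseteq\Pa(X)$ contains a minimal closed $G$-invariant subset $N$, and by $(\star)$ one has $N=\{\delta_x:x\in M_0\}$ for a minimal $M_0\subseteq X$, since $\{\delta_x:x\in X\}$ is closed in $\Pa(X)$ and $x\mapsto\delta_x$ is a $G$-equivariant homeomorphism onto it. Hence $F(\mu):=\{x\in X:\delta_x\in\overline{\Or{\mu}}\}$ is non-empty and closed. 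The family $\{F(\mu):\mu\in\Pa(X)\}$ has the finite intersection property: given $\mu_1,\dots,\mu_k$, set $\mu=\tfrac1k\sum_i\mu_i$ and choose a net $(g_j)$ with $g_j\mu\to\delta_x$; after passing to a subnet so that $g_j\mu_i\to\nu_i$ for all $i$ one gets $\tfrac1k\sum_i\nu_i=\delta_x$, forcing $\nu_i=\delta_x$ and hence $x\in\bigcap_iF(\mu_i)$. By compactness of $X$, $M:=\bigcap_{\mu\in\Pa(X)}F(\mu)\neq\emptyset$, and $M$ is exactly the set appearing in Lemma \ref{BouT}; so the action of $G$ on $X$ is strongly proximal and $M$ is its unique minimal set. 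Since the action has no finite orbit, $M$ is infinite, which gives \eqref{one}; and \eqref{two} follows because $\Pa(M)$ is a closed $G$-invariant subset of the proximal system $\Pa(X)$, so the subsystem $(G,M)$ is strongly proximal too.

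\textbf{Proof of $(\star)$ — the main obstacle.} Suppose $N\subseteq\Pa(X)$ is minimal and contains no Dirac measure. For $\nu\in\Pa(X)$ I would consider the ``median'' set
$$\operatorname{Med}(\nu)=\bigl\{p\in X:\ \nu(C)\le\tfrac12\ \text{for every connected component $C$ of $X\setminus\{p\}$}\bigr\},$$
which is meaningful because $X$ is locally connected (Lemma \ref{Closed-fa}(i)); two components of mass $>\tfrac12$ always intersect, so by Lemma \ref{Closed-fa}(ii) $\operatorname{Med}(\nu)$ is a non-empty subcontinuum, and $\operatorname{Med}(g\nu)=g\,\operatorname{Med}(\nu)$. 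If the subcontinua $\{\operatorname{Med}(\nu)\}_{\nu\in N}$ pairwise intersect, then $\bigcap_{\nu\in N}\operatorname{Med}(\nu)$ is a non-empty $G$-invariant subcontinuum by Lemma \ref{Closed-fa}(ii); if they do not, a retraction onto one of them collapses another to a point and produces a proper $G$-invariant subcontinuum. In either case one is reduced, exactly as in the dendrite argument of \cite{M-Nag}, to extracting from this sub-object — after first peeling off by induction the atomic parts of the measures in $N$, whose total mass is a $G$-invariant (hence constant) function on $N$ — either a $G$-fixed point or a $G$-invariant probability measure on $X$; either outcome contradicts the hypothesis that the action has no finite orbit (equivalently, for dendrons, that there is no $G$-invariant probability measure). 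The only part of \cite{M-Nag} that genuinely has to be rewritten for non-metrizable dendrons is this median/centroid analysis, and Lemma \ref{Closed-fa} is exactly what is needed; I expect this to be the technical heart of the proof, and one may alternatively invoke the tameness of dendron actions from \cite{Gla-M} at this point.

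\textbf{Proof of \eqref{three}.} Let $M$ be the unique infinite minimal set. Strong proximality of $(G,M)$ says that $\overline{\Or{\mu}}$ meets $\{\delta_x:x\in M\}$ for every $\mu\in\Pa(M)$; feeding in measures of full support and using that a homeomorphism of a dendron sends the arc $[x,y]$ onto $[gx,gy]$, I would obtain, for each $p\in M$ and each neighbourhood $V$ of $p$, an element $g\in G$ and a point $q\neq p$ with $g(M\setminus V')\subseteq V$ for some neighbourhood $V'$ of $q$ — a \emph{source--sink} datum $(p,q)$ for $g$. The set of attracting points $p$ obtained this way is $G$-invariant, hence all of $M$ by minimality, so (since $M$ is infinite and $G$ acts minimally) one can pick $g,h\in G$ whose data $(p_g^{+},p_g^{-})$, $(p_h^{+},p_h^{-})$ are four distinct points; passing to large powers $g^{N},h^{N}$, the four corresponding neighbourhoods satisfy the hypotheses of the ping-pong lemma, so $\langle g^{N},h^{N}\rangle$ is a free group of rank $2$. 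In the dendrite case this is carried out in \cite{M-Nag}; for dendrons, the tameness proved in \cite{Gla-M} keeps the enveloping semigroup small enough for the extraction of these elements to go through.
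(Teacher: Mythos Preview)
Your overall strategy is sound, but it inverts the logical order the paper actually uses, and in doing so you trade a short self-contained argument for~(\ref{one}) against a heavier measure-theoretic one. In the paper, (\ref{one}) is proved \emph{directly} and first: given two distinct minimal sets $M_\alpha,M_\beta$, the convex hulls $[M_\alpha],[M_\beta]$ must intersect (otherwise the retraction of one onto the other produces a two-point invariant set); the Helly property of Lemma~\ref{Closed-fa}(ii) then gives a non-empty $G$-invariant $X_\infty=\bigcap_\alpha[M_\alpha]$, and a rim-finiteness argument (choosing a neighbourhood with finite boundary) rules out the existence of a second minimal set. Only afterwards are (\ref{two}) and (\ref{three}) obtained, essentially by citing~\cite{Gla-M}. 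By contrast you aim to establish strong proximality of $(G,X)$ first via your claim $(\star)$ and then read off uniqueness from Lemma~\ref{BouT}. This is legitimate---it is essentially the Duchesne--Monod/Glasner--Megrelishvili route---but note that your sketch of $(\star)$ is where the real work hides: the sentence ``if they do not [pairwise intersect], a retraction onto one of them collapses another to a point and produces a proper $G$-invariant subcontinuum'' does not yet name a $G$-invariant object, and the ``peeling off atomic parts'' step needs an actual induction to terminate. The paper sidesteps all of this for~(\ref{one}) with the elementary convex-hull/rim-finiteness argument, which is worth knowing; for (\ref{two}) and (\ref{three}) your outline is in fact more detailed than the paper's, which simply defers to~\cite{Gla-M}.
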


\begin{proof} For the proof  of (\ref{one}),
We proceed by contradiction.
Let $(M_\alpha)_{\alpha \in I}$ be the family of minimal sets of $X$. Then, for any $\alpha, \beta \in I$,  if $M_\alpha \cap M_\beta=\emptyset$ then $[M_\alpha] \cap [M_\beta] \neq \emptyset$, since otherwise the endpoints of the arc $[r_{[M_\alpha]}(x),r_{[M_\beta]}(y)]$ (where $(x,y)$ is any pair in $[M_\beta]\times [M_\alpha]$) would form a finite orbit. Put
$$X_\infty=\bigcap_{\alpha}[M_\alpha].$$
Then, by Lemma \ref{Closed-fa}, we get that $X_\infty$ is nonempty. We further have that the restriction action of $G$ to it has no finite orbit.
Now, let $M_1,M_2$ be two minimal sets of $X_\infty$, then $ [M_1]=[M_2]=X_\infty$, and so $M_1=M_2$. At this stage, we have proved that in $X_\infty$ there is a unique minimal set $M$ and its convex hull $[M]=X_\infty\subset [M_{\alpha}]$ for any $\alpha$.

Fix an $\alpha\in I$ such that $M_\alpha\cap M=\emptyset$, and put $Y=[M_{\alpha}]$. Let $x\in M_\alpha$ and suppose $Z$ is the connected component in $Y$ of $Y\setminus X_\infty$ that contains $x$ and $a=r_{X_\infty}(x)$. It follows that for any $g\in G$, $g(\overline{Z})\cap X_{\infty}=\{g(a)\}$. By assumption, there is no finite orbit, therefore the orbit of $a$ is infinite and so is the orbit of the set $Z$. By taking a suitable net $(g_i)_{i \in I}$ in $G$,  we may assume that $\lim_{i}g_i(a)=b\in X_\infty$ and $g_i(a)\neq b$ for all $i$. Therefore $M$ is a subset of $\overline{\text{Orb}_G(b)}$, and the point $b$ is obviously outside the set $M_\alpha$. But $X$ is rim-finite, we can thus find a neighborhood $U$ of $b$ in $X$ with finite boundary and such that $U\cap M_\alpha=\emptyset$. It turns out that for infinitely of $i$, $\{g_i(a)\}$ meets $U$ however $\{g_i.x\}$ meets $X\setminus U$ and this yields that the boundary of $U$ is infinite, which is inconsistent with the rime-finiteness.\\

Let us denoted by $M$ this unique minimal set. (\ref{two}) follows by the same arguments as in \cite{Gla-M}. Therefore, by Lemma \ref{BouT}, $$M =\big\{x \in X/ \forall \mu \in \Pa(X), \delta_x \in \overline{\text{Orb}_G(\mu)}\big\}.$$ But, the factor of strongly proximal action is strongly proximal. Whence, the action of $G$ on $M$ is strongly proximal. To finish the proof, we need to prove (\ref{three}). For that we follows \cite{Gla-M}. The proof of the lemma is complete.
\end{proof}

This combined with Margulis's theorem \cite{Marg} gives the following:

\begin{cor} Let $G$ be a group acting on a local dendrite $X$ without invariant probability measure. Then,the action is strongly proximal and the group must contain a non-abelian free group.
\end{cor}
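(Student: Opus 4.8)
The plan is to split the argument according to whether or not $X$ contains a circle: the circle-free case is a dendron and falls to Theorem~\ref{Shi}, while the presence of a circle is handled by Margulis's theorem for circle actions.

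First I would record that the hypothesis forbids finite orbits: a finite orbit $\{x_1,\dots,x_n\}$ would produce the $G$-invariant measure $\frac{1}{n}\sum_{i=1}^{n}\delta_{x_i}\in\Pa(X)$. So from now on the action has no finite orbit. If $X$ contains no simple closed curve it is a dendrite, hence a metrizable dendron, and Theorem~\ref{Shi} applies directly: there is a unique infinite minimal set $M$, $G$ contains a non-abelian free subgroup, and — as in that proof, through Lemma~\ref{BouT}, the set $\{x\in X:\ \delta_x\in\overline{\mathrm{Orb}_G(\mu)}\ \text{for every }\mu\in\Pa(X)\}$ being the nonempty set $M$ — the action of $G$ on $X$ is strongly proximal. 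This settles the dendrite case.

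Now suppose $X$ contains a circle. Consider the minimal graph $\Gamma_X$ containing all the circles of $X$; since a homeomorphism of $X$ permutes the finitely many circles of $X$, the graph $\Gamma_X$ is $G$-invariant and each $T_g$ restricts to a homeomorphism of $\Gamma_X$. Hence the set $B(\Gamma_X)$ of branch points of the graph $\Gamma_X$ is a finite $G$-invariant subset of $X$; as the action has no finite orbit, $B(\Gamma_X)=\emptyset$. A connected graph with no branch point that carries a circle is itself a single circle, so $\Gamma_X=:C$ is a $G$-invariant circle (and $X$ has exactly one circle). The restricted action of $G$ on the topological circle $C$ has no invariant probability measure — such a measure would, through $A\mapsto\mu(A\cap C)$, be $G$-invariant on $X$ — so Margulis's theorem \cite{Marg} gives that $G$ contains a non-abelian free subgroup and that the circle action is strongly proximal, with unique infinite minimal set $N\subset C$ (a finite orbit being excluded). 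Finally, since $C=\Gamma_X$ contains every circle, $X\setminus C$ is a union of dendritic domains each attached to $C$ at a single point, which yields a $G$-equivariant first-point retraction $r\colon X\to C$; pushing measures forward by $r$ and using that for each $\varepsilon>0$ only finitely many complementary domains of $C$ in $X$ have diameter $\geq\varepsilon$ (so that appropriate translates of a fibre of $r$ collapse to a point under the dynamics already known to be strongly proximal on $C$), one lifts strong proximality from $C$ to all of $X$; in particular $N$ is the unique minimal set of $(X,G)$.

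The genuinely new work is in the second case. Identifying $\Gamma_X$ with a single invariant circle is immediate from $B(\Gamma_X)=\emptyset$, and Margulis's theorem then carries the load on that circle; the step demanding real care is the last one, namely transporting strong proximality from the invariant circle back to $X$. This rests on the local-connectedness estimate that the complementary domains of $C$ in $X$ — and, when $N$ is a Cantor set, the complementary arcs of $N$ in $C$ — have diameters tending to zero, so that arbitrary fibres of $r$ can be driven onto single points by group elements. Everything else reduces either to the dendron result Theorem~\ref{Shi} or to the classical theory of groups acting on the circle.
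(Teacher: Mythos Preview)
Your proposal is correct and follows exactly the route the paper intends: the paper offers no detailed argument for this corollary, merely recording that it follows from Theorem~\ref{Shi} ``combined with Margulis's theorem,'' and your dendrite/circle case split is precisely how that remark is to be unpacked. The one place where you add substantive content is the lifting of strong proximality from the invariant circle $C=\Gamma_X$ to all of $X$ via the equivariant retraction and the null-sequence property of the complementary domains; this step is correct and is a detail the paper suppresses entirely.
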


\begin{proof}[\textbf{Proof of Theorem \ref{equicon}}]
Suppose that $M$ is an infinite minimal subset. Let $(T_n)_n$ be the sequence of trees defined in Proposition \ref{h}. For each $n\in\mathbb{N}$ and for any $a\in E(T_n)$, let $F^n(a)$ be the closure of the union of all connected components of $[M]\setminus \{a\}$ which are disjoint from $T_n$. In this way, $F^n(a)=\ds \bigcup [a,x]$ where the union is taken over all arcs $[a,x]$ such that $x\in M$ and $[a,x]\cap T_n=\{a\}$. Put $F^n=\{F^n(a): \ a\in E(T_n)\}$.

Claim 1. $\lim_{n\to +\infty} Mesh(F^n)=0$. Let $\eps>0$ and $\delta$ be as in Lemma \ref{arc}. By assertion (iv) in Proposition \ref{h}, there is $N\in\mathbb{N}$ such that $d_H(E(T_n), M)<\delta$ for any $n\geq N$. Take an integer $n\geq N$ and any $a\in E(T_n)$, if $x\in M$ is such that $[a,x]\cap T_n=\{a\}$ then there is $b\in E(T_n)$ such that $d(x,b)<\delta$, it follows from Lemma \ref{arc} that $diam([b,x])<\eps$. It turns out that $d(a,x)<\eps$ since $[a,x]\subset[b,x]$. Furthermore, $\diam(F^n(a))<2\eps$. We notice that an alternative proof can be given using the continuity of the map $F \mapsto [F].$\\

Claim 2. For any $g\in G$, $n\in\mathbb{N}$ and $a\in E(T_n)$, $g(F^n(a))=F^n(g(a))$. Indeed, let $x\in M$ be such that $[a,x]\cap T_n=\{a\}$ then $g([a,x])=[g(a),g(x)]$. Moreover, as $g(T_n)=T_n$, $[g(a),g(x)]\cap T_n=\{g(a)\}$.
 It follows that $g(F^n(a))=F^n(g(a))$.\\

Now, let $\eps>0$ and $x\in M$. Then by claim 1, there is $n\in\mathbb{N}$ such that $Mesh(F^n)<\eps$. Take $a\in E(T_n)$ such that $x\in F^n(a)$ then $F^n(a)$ is a neighborhood of $x$ in $[M]$. For any $y\in F^n(a)$ and for any $g\in G$, we have $g(y)\in F^n(g(a))$. However, $\diam (F^n(g(a))<\eps$. This shows the equicontinuity of the action of $G$ restricted to $M$. 	
\end{proof}

For the proof of Theorem \ref{equiconlocalden}, we need the following two lemmas from \cite{M-Nag2}.

\begin{lem}\label{circle-finite}
	Let $G$ be a subgroup of Hom($S^1$) and $M \subset S^1$ a minimal set of $G$. Assume
	that $G$ has a finite orbit. Then M is a finite orbit.
\end{lem}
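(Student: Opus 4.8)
The plan is to use the classical structure theory for groups of circle homeomorphisms together with the hypothesis that $G$ has a finite orbit. First I would recall the well-known trichotomy for a minimal set $M$ of a subgroup $G\subset\mathrm{Hom}(S^1)$: either $M=S^1$, or $M$ is finite (a single finite orbit), or $M$ is a Cantor set; this is exactly Lemma \ref{G-continuum} specialized to $X=S^1$, or equivalently the classical result cited in the Introduction (\cite{Beklaryan}). So it suffices to rule out the cases $M=S^1$ and $M$ a Cantor set under the standing assumption that $G$ admits some finite orbit $A\subset S^1$, $|A|=k<\infty$.

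Next I would exploit the finite orbit $A$ to show $G$ cannot act minimally on $S^1$ nor have a Cantor minimal set. Indeed, if $A=\{a_1,\dots,a_k\}$ is a finite orbit then the complement $S^1\setminus A$ is a disjoint union of $k$ open arcs, and $G$ permutes these arcs; hence $G$ preserves the (finite) partition of $S^1$ into these arcs and their endpoints. Consequently every $G$-orbit is contained in the closure of a single one of these arcs union $A$ — more precisely, the $G$-orbit closure of any point meets $A$ only in points of $A$, and the closed arcs between consecutive points of $A$ are permuted by $G$. A minimal set $M$ must then be contained in the closure of the union of arcs in one orbit of arcs under the permutation action; but $M$ is $G$-invariant and closed, and the endpoints of these arcs lie in $A$, which is itself a minimal set. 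If $M\neq A$, minimality forces $M\cap A=\emptyset$, so $M$ lies in the union of the open arcs $S^1\setminus A$. Then $M$ is a compact $G$-invariant subset of a disjoint union of finitely many open intervals, and $G$ permutes these intervals; passing to the stabilizer $G_0$ of one such interval $I$ (a finite-index subgroup), $G_0$ acts on $I\cong\mathR$ preserving $M\cap \overline{I}$. A group acting on $\mathR$ by homeomorphisms fixing the two endpoints of a compact subinterval has, by a standard argument, a global fixed point in any minimal invariant compact set, so $M\cap\overline I$ contains a $G_0$-fixed point; spreading this over the finite orbit of $I$ produces a finite $G$-orbit inside $M$, and minimality of $M$ then forces $M$ to be finite.

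I expect the main obstacle to be making the last step fully rigorous: namely, arguing that a homeomorphism group of a closed interval $[0,1]$ that has a minimal invariant compact set must have a fixed point inside it (equivalently, that the leftmost point of the minimal set is fixed). The clean way is: let $m=\min(M\cap[0,1])$ in the natural order on the interval; since every $g\in G_0$ preserves the orientation-restricted order on $\overline I$ and maps $M\cap\overline I$ to itself, it must send the minimum to the minimum, so $g(m)=m$ for all $g\in G_0$; thus $\{m\}$ (or its $G$-orbit, of size at most the index $[G:G_0]\le k$) is a finite $G$-invariant set contained in $M$, contradicting that $M$ is infinite. Care is needed because elements of $G$ may reverse orientation, which is why one first passes to an index-$\le 2$ orientation-preserving subgroup (or simply to $G_0$, the stabilizer of the arc $I$, which automatically fixes its endpoints and hence its orientation). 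Once this is in place the proof is complete, since we have shown that the only possibility consistent with the existence of a finite orbit is $M$ finite, i.e. $M$ is a finite orbit.
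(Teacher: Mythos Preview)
The paper does not prove this lemma; it is quoted from \cite{M-Nag2} and used as a black box in the proof of Theorem~\ref{equiconlocalden}. So there is no ``paper's own proof'' to compare against. Your argument is essentially correct and self-contained, which is more than the paper provides here.

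One correction is needed. Your parenthetical claim that the stabilizer $G_0$ of an arc $I$ ``automatically fixes its endpoints and hence its orientation'' is false: an orientation-reversing element of $G$ that maps $I$ to itself will swap the two endpoints of $I$. Your first alternative, however, is the right fix: pass first to the orientation-preserving subgroup $G^{+}\le G$ of index at most $2$, and then let $G_0$ be the stabilizer of $I$ in $G^{+}$. An orientation-preserving homeomorphism of $S^1$ stabilizing the arc $I$ does fix both endpoints, hence restricts to an increasing self-homeomorphism of $\overline{I}\cong[0,1]$, and therefore fixes $m=\min(M\cap I)$. (Note $M\cap I=M\cap\overline{I}$ is compact since $M\cap A=\emptyset$.) The $G$-orbit of $m$ then has cardinality at most $[G:G_0]\le 2k$, and this finite $G$-orbit lies in $M$; minimality forces $M$ to equal it.

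A minor simplification: the trichotomy from Lemma~\ref{G-continuum} is not actually needed. Once you know $M\cap A=\emptyset$ (which follows from minimality whenever $M\neq A$), the arc argument above directly produces a finite orbit inside $M$, so there is no need to separately rule out $M=S^1$ or $M$ Cantor.
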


\begin{lem}\label{graph-finite}
	Let $X$ be a graph different from a circle and a group $G$ acting on $X$ . Then a
	minimal set $M$ of $G$ is finite (in fact a finite orbit).
\end{lem}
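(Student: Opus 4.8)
The plan is to use the finitely many endpoints and branch points of $X$ to cut the graph into finitely many open arcs, and to show that a minimal set $M$ either lies inside this finite ``skeleton'' or has only finitely many edgewise extreme points, which already form a $G$-invariant finite set. Concretely, I would first set $V=E(X)\cup B(X)$. For a graph this set is finite, since every such point is an endpoint of one of the finitely many arcs whose union is $X$; and it is $G$-invariant, because each $T_g$ is a homeomorphism and homeomorphisms preserve the order of points, hence carry $E(X)$ onto $E(X)$ and $B(X)$ onto $B(X)$. Given a minimal set $M$, the set $M\cap V$ is closed, $G$-invariant and contained in $M$, so by minimality either $M\subseteq V$ — and then $M$ is finite — or $M\cap V=\emptyset$, which I assume from now on.

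Next I would analyse $X\setminus V$. Since $V$ is finite, $X\setminus V$ is open, and by local connectedness of $X$ its connected components $e_1,\dots,e_k$ are open; there are finitely many of them because $X$ is a finite union of arcs. Every point of $e_j$ has order $2$ in $X$, so $e_j$ is a connected $1$-manifold, hence homeomorphic either to an open arc or to a circle; the latter is impossible, since a circular component would be compact, hence clopen in $X$, hence all of $X$, contradicting the hypothesis that $X$ is not a circle. So each $e_j$ is an open arc; I fix a linear order on it. Because $M$ is closed, disjoint from $V$, and the frontier of $e_j$ is contained in $V$, one has $M\cap\overline{e_j}=M\cap e_j$; being closed in the compact set $\overline{e_j}$, the set $M\cap e_j$ is thus a compact subset of the open arc $e_j$, and therefore, when non-empty, has a smallest element $a_j$ and a largest element $b_j$.

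Then I would put $W=\{a_j,b_j:\ M\cap e_j\neq\emptyset\}$, a finite non-empty subset of $M$ (non-empty since $M\neq\emptyset$ and $M\subseteq\bigcup_j e_j$). To see $W$ is $G$-invariant, note that each $T_g$ permutes $e_1,\dots,e_k$, say $T_g(e_j)=e_{j'}$, and $T_g\colon e_j\to e_{j'}$ is a homeomorphism of open arcs, hence monotone; since $T_g(M)=M$ (minimal sets are strongly invariant), $T_g(M\cap e_j)=M\cap e_{j'}$, and according as $T_g|_{e_j}$ is increasing or decreasing it sends the pair $\{a_j,b_j\}$ to $\{a_{j'},b_{j'}\}$ — in either case into $W$. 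By minimality of $M$ we then get $M=W$, so $M$ is finite; and a finite minimal set is a single orbit, which gives the parenthetical refinement.

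The main point requiring care is the step, once $M\cap V=\emptyset$, of verifying that $M$ really sits as a compact subset of the union of the open edges — so that edgewise minima and maxima exist — together with ruling out circular components of $X\setminus V$; this is exactly where the hypothesis ``$X$ is not a circle'' is used. A secondary subtlety is that a homeomorphism need not respect the chosen orientations of the edges, so one must allow it to interchange the two edgewise extreme points. The whole scheme is the graph analogue of the remark that, for a group acting on an interval, $\{\min M,\max M\}$ is already a finite $G$-invariant set.
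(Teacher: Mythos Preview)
Your argument is correct. The paper, however, does not actually prove this lemma: it is quoted verbatim from \cite{M-Nag2} and used as a black box in the proof of Theorem~\ref{equiconlocalden}. So there is no ``paper's own proof'' to compare with here.

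That said, your proof is a clean, self-contained argument and is essentially the standard one. The two key points --- that $V=E(X)\cup B(X)$ is a finite $G$-invariant set, and that on each open edge $e_j$ the set $M\cap e_j$ is compact so that its two extreme points give a finite $G$-invariant subset of $M$ --- are exactly the ideas used in \cite{M-Nag2}. Your handling of the potential subtleties (ruling out circular edge-components via connectedness of $X$ and the hypothesis $X\neq S^1$; allowing $T_g$ to reverse the chosen orientation of an edge; checking $M\cap\overline{e_j}=M\cap e_j$ so that compactness is available) is careful and accurate. The only place one might ask for an extra word is the finiteness of the set of edges, but ``$X$ is a finite union of arcs'' is enough: each $A_i\setminus V$ has finitely many components, hence so does $X\setminus V$.
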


We are now able to proceed to the proof of Theorem \ref{equiconlocalden}.

\begin{proof}[\textbf{Proof of Theorem \ref{equiconlocalden}}]
	Suppose that $X$ is a local dendrite not a dendrite. First, note that by assuming the existence of finite orbit, a finite one will occur in the canonical graph $\Gamma_X$. Indeed, if $\{a_1,\dots,a_s\}$ is finite orbit disjoint from $\Gamma_X$, then for $i=1,\dots,s$, let $Y_i$ be the connected component of $X\setminus\Gamma_X$ that contains $a_i$ and let $\{b_i\}=\overline{Y_i}\cap \Gamma_X$. For any $i,j\in\{1,\dots,s\}$, there is $g\in G$ such that $g(a_i)=a_j$. Hence,
	$g(Y_i)=Y_j$, since, obviously, $\Gamma_X$ is $G$-invariant. It follows that $g(b_i)=b_j$. Therefore $\{b_1,\dots,b_s\}$ is a finite orbit included into $\Gamma_X$. Furthermore, any minimal set with non empty intersection with $\Gamma_X$ will be entirely included into $\Gamma_X$. Hence, it is finite by Lemma \ref{circle-finite} and Lemma \ref{graph-finite}. Now suppose $K$ is an infinite minimal set. Then $K$ is disjoint from $\Gamma_X$ and intersects at most finitely number of connected component of $X\setminus \Gamma_X$, which we denote by $Z_1,\dots,Z_k$. Put $\{z_i\}=Z_i\cap \Gamma_X$. by similar reasoning as above, $\{z_1,\dots,z_k\}$ is a finite orbit. Now let us consider the set $Z=Z_1\cup\dots\cup Z_k$. This set is $G$-invariant and it is a finite union of dendrites. By collapsing the orbit $\{z_1,\dots,z_k\}$ to a single point $z$, we obtain a quotient space $\widetilde{Z}$ which is in fact a dendrite, in this new quotient space, the set $K$ still minimal for the generated action of $G$ and so by Theorem \ref{equicon}, $K$ is equicontinuous for the generated action on $\widetilde{Z}$ and the same holds for the initial action on $Z$.
	
\end{proof}

\section{On the action of non-amenable group and Thompson groups.}\label{SMain-2}
The notion of amenability was introduced by von Neumann to shed some light on the Banach-Tarski paradox \cite{New}. Roughly speaking, the Banach-Tarski paradox \cite{BanaT} say that in $\Real^3$, every two bounded sets $A$ and $B$ with non-empty interior can be decomposed in finite pieces say $n$ such that $\displaystyle  A=\bigcup_{i=1}^{n}A_i$ and $\displaystyle  B=\bigcup_{i=1}^{n}B_i$ so that $A_i$ can be rotated to $B_i$, $i=1,\cdots,n$. As, we will see, von Neumann noticed that this paradox is due to the fact  that the group of rotation $SO(3)$ is not amenable.\\

We recall that if $G$ is a group, then, obviously, $G$ acts from the left on the space of all bounded complex-valued function  $\mathcal{B}(G)$ equipped with the uniform norm $\|.\|_{\infty}$. $G$ is said to be amenable if there exist a non-negative bounded linear functional $\mu$ on $\mathcal{B}(G)$ left invariant and such that $\mu(1)=1$. $\mu$ is called left mean. In the case of discrete group (group equipped with discrete topology) this equivalent to the existence of finitely additive measure.  von Neumann proved that the class of amenable groups is closed under subgroups, factor groups, extensions and  direct groups. He further proved that all abelian groups are amenable. It follows that the solvable groups are amenable. By applying F\o{}lner criterion, it is easy to see that the finitely generated groups of subexponential growth are amenable. It follows that the finitely generated nilpotent groups are amenable. We recall that the group $G$ is amenable if and only it has a F\o{}lner sequence, that is, a sequence $\{F_n\}$ such that
$$\frac{\big|gF_n \Delta F_n\big|}{\big|F_n\big|}\tend{n}{+\infty}0.$$
von Neumann proved also the following Lemma. We present the proofs for sake of completeness of exposition.
\begin{lem}\label{Vn}Let $G$ be a group which contain a free non-abelian group then $G$ is not amenable.
\end{lem}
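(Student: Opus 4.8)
The plan is to prove the classical fact due to von Neumann that a group containing a non-abelian free subgroup cannot be amenable, using the F\o{}lner-sequence characterization recalled just above the statement. Since amenability passes to subgroups (this is also recalled above, and in any case the argument below works directly), it suffices to show that the free group $F_2=\langle a,b\rangle$ on two generators is not amenable; if $G$ contained a F\o{}lner sequence then so would $F_2$, so it is enough to contradict the existence of a F\o{}lner sequence in $F_2$.

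\smallskip

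\textbf{Approach via the paradoxical decomposition of $F_2$.} First I would recall the standard partition of $F_2$ obtained by looking at the first letter of a reduced word. Write $W(x)$ for the set of reduced words beginning with the letter $x$, for $x\in\{a,a^{-1},b,b^{-1}\}$. Then
$$F_2=\{e\}\cup W(a)\cup W(a^{-1})\cup W(b)\cup W(b^{-1}),$$
and moreover $F_2=W(a)\cup aW(a^{-1})$ and $F_2=W(b)\cup bW(b^{-1})$, because left-multiplying a word starting with $a^{-1}$ by $a$ produces exactly the words that do \emph{not} start with $a$. Thus $F_2$ is covered by two translates of $W(a)\cup W(a^{-1})$ and also by two translates of $W(b)\cup W(b^{-1})$; adding these observations, four translates of the five-piece partition cover $F_2$ ``twice'', which is the source of the contradiction with any invariant mean or F\o{}lner sequence.

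\smallskip

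\textbf{Deriving the contradiction from a F\o{}lner sequence.} Suppose $F_2$ were amenable and let $\{F_n\}$ be a F\o{}lner sequence, so that $|gF_n\,\Delta\,F_n|/|F_n|\to 0$ for every fixed $g$. Intersecting $F_n$ with the four pieces $W(a),W(a^{-1}),W(b),W(b^{-1})$ and using that $|gA\cap gF_n|=|A\cap F_n|$ together with the F\o{}lner property to replace $gF_n$ by $F_n$ up to $o(|F_n|)$ error, one gets, from $W(a)\cup aW(a^{-1})=F_2$ and $W(b)\cup bW(b^{-1})=F_2$,
$$|F_n\cap W(a)|+|F_n\cap W(a^{-1})|\ \ge\ |F_n|-o(|F_n|),$$
$$|F_n\cap W(b)|+|F_n\cap W(b^{-1})|\ \ge\ |F_n|-o(|F_n|).$$
But $W(a),W(a^{-1}),W(b),W(b^{-1})$ are pairwise disjoint subsets of $F_2$, so the left-hand sides sum to at most $|F_n|$, forcing $|F_n|\ge 2|F_n|-o(|F_n|)$, i.e.\ $|F_n|\le o(|F_n|)$, which is absurd for $n$ large since $|F_n|\ge 1$. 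Hence $F_2$ has no F\o{}lner sequence and is not amenable; since amenability is inherited by subgroups, any $G$ containing a copy of $F_2$ is not amenable either. (Equivalently, one can phrase the final contradiction directly in terms of a left-invariant mean $\mu$: the two covering relations give $\mu(W(a))+\mu(W(a^{-1}))\ge 1$ and $\mu(W(b))+\mu(W(b^{-1}))\ge 1$, while disjointness gives $\mu(W(a))+\mu(W(a^{-1}))+\mu(W(b))+\mu(W(b^{-1}))\le 1$.)

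\smallskip

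\textbf{Main obstacle.} There is no real analytic difficulty here; the only point requiring care is the purely combinatorial verification that $W(a)\cup aW(a^{-1})=F_2$ — that is, that left translation by $a$ sends the words beginning with $a^{-1}$ onto precisely the complement of the words beginning with $a$ (one must remember that the empty word $e$ lies in $aW(a^{-1})$, coming from $a^{-1}\in W(a^{-1})$). Getting the bookkeeping of these translates exactly right, and then pushing the error terms through the F\o{}lner limit cleanly, is the whole content of the proof.
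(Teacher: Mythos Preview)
Your proof is correct and follows essentially the same route as the paper: reduce to showing $\F_2$ is not amenable via its paradoxical decomposition into $\{e\}\cup W(a)\cup W(a^{-1})\cup W(b)\cup W(b^{-1})$ together with the covering relations $F_2=W(a)\cup aW(a^{-1})=W(b)\cup bW(b^{-1})$. The only cosmetic difference is that the paper works directly with a left-invariant finitely additive measure (the version you put in parentheses at the end), whereas you run the same counting through a F\o{}lner sequence first.
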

We recall that the free group $\F_S$ generated by the alphabet $S$ is the set of all the class words generated by $S$. For a nice account we refer to \cite[Chap. 7]{Hall}. Let $s \in S$, we denoted by $W(s)$ the set of reduced words beginning with $s$. Now, let us observe that the proof of Lemma \ref{Vn} follows from the following.

 \begin{lem}\label{Vn1}Let $\F_2$ be a free non-abelian group generated by two element $s,t$  then $\F_2$ is not amenable.
 \end{lem}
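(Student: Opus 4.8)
The plan is to exhibit a paradoxical decomposition of $\F_2$, which is incompatible with the existence of a left-invariant finitely additive probability measure (equivalently, a left mean on $\mathcal{B}(\F_2)$). First I would set up the notation: write $\F_2 = \langle s, t \rangle$, and for each generator $u \in \{s, s^{-1}, t, t^{-1}\}$ let $W(u)$ denote the set of reduced words beginning with the letter $u$. Since every nonempty reduced word starts with exactly one of these four letters, one gets the partition
\[
\F_2 = \{e\} \cup W(s) \cup W(s^{-1}) \cup W(t) \cup W(t^{-1}).
\]

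The key algebraic observation is the pair of identities
\[
\F_2 = W(s) \cup s\, W(s^{-1}), \qquad \F_2 = W(t) \cup t\, W(t^{-1}).
\]
Indeed, if a reduced word $w$ does not begin with $s^{-1}$, then $s^{-1} w$ reduces to a word beginning with $s^{-1}$ only after the cancellation is accounted for; more carefully, $s\,W(s^{-1})$ consists exactly of those reduced words that do \emph{not} begin with $s$ (including $e$, coming from $s \cdot s^{-1}$), so $W(s)$ and $s\,W(s^{-1})$ together cover $\F_2$. The same argument gives the statement for $t$. Thus $\F_2$ is covered by the four translated pieces $W(s)$, $s\,W(s^{-1})$, $W(t)$, $t\,W(t^{-1})$, while at the same time $W(s)$, $W(s^{-1})$, $W(t)$, $W(t^{-1})$ are pairwise disjoint subsets of $\F_2$.

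Now suppose, for contradiction, that $\F_2$ is amenable, and let $\mu$ be a left-invariant finitely additive probability measure on all subsets of $\F_2$ (this is the discrete-group reformulation of the left mean recalled just before the lemma). By left invariance, $\mu(s\,W(s^{-1})) = \mu(W(s^{-1}))$ and $\mu(t\,W(t^{-1})) = \mu(W(t^{-1}))$. From the covering relations and finite subadditivity,
\[
1 = \mu(\F_2) \le \mu(W(s)) + \mu(s\,W(s^{-1})) = \mu(W(s)) + \mu(W(s^{-1})),
\]
and likewise $1 \le \mu(W(t)) + \mu(W(t^{-1}))$. Adding these two inequalities gives
\[
\mu(W(s)) + \mu(W(s^{-1})) + \mu(W(t)) + \mu(W(t^{-1})) \ge 2.
\]
On the other hand, these four sets are pairwise disjoint and contained in $\F_2$, so by finite additivity their measures sum to at most $\mu(\F_2) = 1$, a contradiction. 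Hence no such $\mu$ exists and $\F_2$ is not amenable.

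The only genuinely delicate point is the verification of the covering identities $\F_2 = W(u) \cup u\,W(u^{-1})$, which rests on a careful bookkeeping of reduced-word cancellation; everything else is a short formal manipulation with the invariant mean. Once Lemma \ref{Vn1} is in hand, Lemma \ref{Vn} follows immediately: if $G$ contains a subgroup isomorphic to $\F_2$ and $G$ were amenable, then amenability would pass to the subgroup $\F_2$ (a fact recalled above, that the class of amenable groups is closed under passing to subgroups), contradicting Lemma \ref{Vn1}.
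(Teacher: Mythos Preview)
Your proof is correct and follows essentially the same classical paradoxical-decomposition argument as the paper: both use the sets $W(u)$ of reduced words beginning with a given generator, the partition $\F_2=\{e\}\cup W(s)\cup W(s^{-1})\cup W(t)\cup W(t^{-1})$, and the covering identities $\F_2=W(u)\cup u\,W(u^{-1})$ to contradict the existence of a left-invariant finitely additive probability. The only cosmetic difference is that you phrase the second step via subadditivity, while the paper uses additivity (the unions $W(u)\cup u\,W(u^{-1})$ are in fact disjoint), leading to the same numerical contradiction $1\ge 2$.
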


\begin{proof} Suppose, \it{per impossible}, that there is  a left invariant finitely additive measure $\mu$ on $\F_2$. We start by noticing that we have
\begin{eqnarray}\label{BT1}
\F_2=\bigcup_{g \in E}W(g), \textrm{~~where~~} E=\big\{e,s,s^{-1},t,t^{-1}\big\}
\end{eqnarray}	
and
\begin{eqnarray}\label{BT2}
\F_2=W(s) \bigcup s^{-1} W(s) = W(t) \bigcup t^{-1} W(t).
\end{eqnarray}
Therefore, $\mu{\{e\}}=0$. Moreover, since $\mu$ is additive, by \eqref{BT1}, we get,
$$\mu(\F_2)=
\mu(\{e\})+\mu(W(s))+\mu(W(s^{-1}))+\mu(W(t))+\mu(W(t^{-1})),$$
We further have, by \eqref{BT2},
$$\mu(\F_2)=\mu(W(s))+\mu(W(s^{-1}))=
\mu(W(t))+\mu(W(t^{-1})).$$ This yields a contradiction since $\mu(\F_2)=1,$ and thus the proof of the theorem is complete.
\end{proof}
Let us notice that the equation \eqref{BT1} combined with \eqref{BT2} is exactly what is called Banach-Tarski paradox.\\

According to Day\cite{Day}, it is seems that von Neumann conjectured that any non-amenable group contain a non-abelian free group. It is turns out that this conjecture is false. Indeed, Ol'shanskii proved that the so-called ``Tarski monsters" groups are not non-amenable groups \cite{Ol},\cite{Ol2}, \cite{Ol3}. Later, Adian showed \cite{Ad} that the free Burnside group of odd exponent $>665$ with at least two generator is non amenable \footnote{The representation of the group is given by $\big<a_1,\cdots,a_m/w^n=1\big>$, where $w$ is in the set of all words of the alphabet$\{a_1,\cdots,a_m\}$}. Very recently, N. Monod \cite{Mono} gives a simple counterexample to \linebreak von Neumann conjecture. Indeed, he proved that if $A$ is a subring of $\Real$ and $A \neq \Z$, then the  subgroups $H(A)$ of the group $G(A)$ of piecewise projective transformations contain finitely generated subgroups that are non-amenable without non-abelian free subgroups.\\

For the proof of Theorem \ref{noamenable-equicon}, we start by stating the following lemma. This lemma can be seen as a corollary of the main result in \cite{Mono}. But, we give here a simple proof.
\begin{lem}\label{orbitfinie}
	Let $G$ be a non-amenable group without non-abelian free group acting on the nondegenerate dendrite $X$. Then the action has a finite orbit.
\end{lem}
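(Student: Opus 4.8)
The plan is to argue by contradiction: suppose $G$ is non-amenable, contains no non-abelian free subgroup, acts on a nondegenerate dendrite $X$, and has no finite orbit. By Theorem \ref{Shi} (applied with $X$ a dendrite, hence a dendron), the absence of a finite orbit forces the action on the unique infinite minimal set $M$ to be strongly proximal, and moreover $G$ must then contain a non-abelian free group on two generators. This directly contradicts the hypothesis that $G$ has no non-abelian free subgroup. Hence the action must have a finite orbit.

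Let me spell out the steps. First, I would observe that since $X$ is a nondegenerate dendrite, it is in particular a dendron, so Theorem \ref{Shi} applies verbatim. Second, I invoke the dichotomy: if the action had no finite orbit, then conclusion (\ref{three}) of Theorem \ref{Shi} gives a copy of $\F_2$ inside $G$. Third, I note this is incompatible with the standing hypothesis on $G$. Therefore the action has a finite orbit, which is exactly the claim. One should also make sure the nondegeneracy of $X$ is used only to guarantee that $X$ genuinely carries a nontrivial dendron structure so that Theorem \ref{Shi} is not vacuous — a degenerate (one-point) dendrite would trivially have a finite orbit anyway, so the statement holds in that case too.

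The only subtlety — and the place where one must be slightly careful rather than merely citing — is the logical direction of Theorem \ref{Shi}: it is stated as ``if the action has no finite orbit, then ($M$ unique infinite minimal set, strongly proximal, and $G\supset \F_2$)''. We are using the contrapositive of the third conclusion: ``$G$ contains no $\F_2$'' $\Rightarrow$ ``the action has a finite orbit''. Since amenability of $G$ is never used in this argument, the lemma is in fact a statement about a strictly larger class of groups than amenable ones, which is the whole point. I expect no real obstacle here; the content is entirely packaged in Theorem \ref{Shi}, and the proof is essentially a one-line deduction once that theorem is in hand. The alternative route — deriving the conclusion directly from N. Monod's results on piecewise projective groups \cite{Mono} — is mentioned for context but is heavier machinery than needed.
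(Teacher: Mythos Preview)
Your proposal is correct and follows essentially the same approach as the paper: argue by contradiction, apply Theorem \ref{Shi} to the dendrite (a dendron) to obtain a free non-abelian subgroup of $G$ when there is no finite orbit, and derive the contradiction with the hypothesis on $G$. The paper's proof is precisely this one-line deduction, without the additional commentary you provide on nondegeneracy and the contrapositive direction.
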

\begin{proof} Suppose, \textit{per impossible}, that the action has no finite orbit. Then, by Lemma \ref{Shi}, the action on its minimal set is strongly proximal. We further have that the group $G$ contains a free group which contradicts our assumption. The proof of the lemma is complete.
\end{proof}	

\begin{lem}\label{keylemma1}
	Let $G$ be a non-amenable group without non-abelian free group acting on the nondegenerate dendrite $X$. Then the infinite minimal sets of the action of $G$ are Cantor sets.
\end{lem}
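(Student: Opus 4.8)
The plan is to deduce the statement directly from the two lemmas just invoked, together with the dichotomy for minimal sets of actions with a finite orbit. First I would apply Lemma \ref{orbitfinie}: since $G$ is non-amenable and contains no non-abelian free subgroup, and it acts on the nondegenerate dendrite $X$, that lemma guarantees that the action has a finite orbit. This is the only place where the hypotheses on $G$ (non-amenability together with the absence of a free subgroup) enter; once a finite orbit is available, the conclusion becomes a purely topological-dynamical fact about dendrite actions.

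Next I would invoke Lemma \ref{Infinit-mi}, which says that for a group acting on a dendrite with a finite orbit, every minimal set $M$ is either a finite orbit or a Cantor set. Combining this with the previous step, any minimal set of the action of $G$ on $X$ is a finite orbit or a Cantor set. Therefore, if $M$ is an \emph{infinite} minimal set, it cannot be a finite orbit, so it must be a Cantor set, which is exactly the claim.

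If one wishes to avoid citing Lemma \ref{Infinit-mi} as a black box, one can instead argue from Lemma \ref{G-continuum}: a minimal set is a finite orbit, or all of $X$, or a compact perfect nowhere dense subset. The middle case is excluded because the action fixes an arc (as recalled in Section \ref{tool}, an action with a finite orbit on a dendrite fixes an arc), so $X$ itself cannot be minimal when $X$ is nondegenerate; hence an infinite minimal set is compact, perfect, and nowhere dense, and being a subset of a dendrite it is totally disconnected, so it is a Cantor set. Either route works; I would go with the short one through Lemma \ref{Infinit-mi}.

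There is essentially no technical obstacle here: the statement is a corollary assembled from results already established in the excerpt. The only point requiring a moment's care is making sure the nondegeneracy of $X$ is genuinely used — it is what rules out the possibility that the infinite minimal set is all of $X$ (equivalently, that $X$ is a point) and guarantees the Cantor set is nonempty and perfect in the correct sense.
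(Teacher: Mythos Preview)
Your proposal is correct and follows exactly the paper's own argument: first invoke Lemma~\ref{orbitfinie} to obtain a finite orbit, then apply Lemma~\ref{Infinit-mi} to conclude that any infinite minimal set is a Cantor set. The alternative route through Lemma~\ref{G-continuum} you sketch is a reasonable expansion but unnecessary, since the paper itself uses the direct two-line deduction.
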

\begin{proof} By Lemma \ref{orbitfinie}, the action has a finite orbit. Therefore, by appealing to Lemma \ref{Infinit-mi}, it follows that the infinite minimal sets are Cantor sets. The proof of the lemma is complete.
\end{proof}

\begin{proof}[\textbf{Proof of Theorem. \ref{noamenable-equicon}}] Take $G$ to be any non-amenable group without non-abelian free group. For example, ``Tarski monsters" groups, free Burnside group of odd exponent $>665$ with at least two generator or the subgroups of $H(A)$. Suppose $X$ is a non-degenerate dendrite then by Lemma \ref{orbitfinie} any action of $G$ on $X$ has a finite orbit. By Theorem \ref{equicon}, the restriction of this action to any minimal subset is equicontinuous. Moreover, by Lemma \ref{keylemma1}, any infinite minimal set is a Cantor set.
\end{proof}

\subsection{Thompson group $F$}
Consider the following two homeomorphisms of $[0,1]$:

\begin{equation*}
f(x) = \begin{cases}\frac{x}2 & \quad\text{if $0 \leq x \leq \frac12$}\\
x-\frac14 & \quad\text{if $\frac12<x <\frac34$}\\
2x-1 & \quad\text{if $\frac34 \leq x \leq 1$}
\end{cases}
,~~
g(x) = \begin{cases}x & \quad\text{if $0 \leq x \leq \frac12$}\\
\frac{x}2+\frac14 & \quad\text{if $\frac12<x <\frac34$ }\\
x-\frac18 & \quad\text{if $\frac34 < x <\frac78 $}\\
2x-1 & \quad\text{if $\frac78 \leq x \leq 1.$}
\end{cases}
\end{equation*}
The group generated by $f$ and $g$ is called Thompson group. We denoted by $F$, so $F=<f,g>$. For a nice account on Thompson groups, we referee to \cite{CFP}.\\
B. Duchesne and N. Monod  established that the action of Thompson group has a finite orbit \cite{Dush-Mono}. Form this we deduce the following.

\begin{cor}\label{main3} Any action of the Thompson group $F$ on a dendrite $X$ restricted to its minimal sets is equicontinuous.
\end{cor}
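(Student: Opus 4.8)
\textbf{Proof proposal for Corollary \ref{main3}.}

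The plan is to reduce the statement directly to Theorem \ref{equicon} by invoking the result of Duchesne and Monod \cite{Dush-Mono}. First I would recall that, by \cite{Dush-Mono}, any action of the Thompson group $F$ by homeomorphisms on a dendrite $X$ has a finite orbit; in fact, their result is more precise — the action fixes a point or has an orbit of order at most $2$ — but all we need here is the existence of some finite orbit. Given this, let $M$ be a minimal set of the action of $F$ on $X$. By Lemma \ref{G-continuum}, $M$ is either a finite orbit or a compact perfect nowhere dense subset of $X$ (the case $M = X$ is excluded since a dendrite with a nontrivial group action cannot be minimal when a finite orbit exists, and in any case it is covered below).

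In the first case, where $M$ is a finite orbit, equicontinuity of the restricted action is immediate: a finite set carries the discrete topology, and a group acting on a discrete finite metric space is trivially equicontinuous (choose $\delta$ smaller than the minimal distance between distinct points, so that $d(x,y)<\delta$ forces $x=y$ and hence $d(T_g x, T_g y)=0$ for all $g$). In the second case, $M$ is infinite, and since the action of $F$ on $X$ has a finite orbit, Theorem \ref{equicon} applies verbatim: the restriction of the action of $F$ to the infinite minimal set $M$ is equicontinuous. Combining the two cases, the action of $F$ restricted to any minimal set is equicontinuous, which is the assertion of the corollary.

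The only genuine input beyond the machinery already developed in this paper is the Duchesne–Monod theorem guaranteeing a finite orbit; once that is in hand, there is no real obstacle, as the corollary is essentially a specialization of Theorem \ref{equicon} together with the trivial observation about finite minimal sets. One subtlety worth a sentence in the write-up is that Theorem \ref{equicon} is stated for infinite minimal sets, so one must explicitly dispatch the finite-orbit case separately; but this is routine. If desired, one could also note that by Lemma \ref{Infinit-mi} the infinite minimal sets of $F$ are in fact Cantor sets, sharpening the description although this is not required for equicontinuity.
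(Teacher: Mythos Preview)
Your proposal is correct and follows exactly the approach the paper intends: the paper states the corollary immediately after recalling the Duchesne--Monod result that any action of $F$ on a dendrite has a finite orbit, and treats it as a direct consequence of Theorem \ref{equicon} without further argument. You have simply made explicit the routine case distinction (finite versus infinite minimal set) and the trivial equicontinuity on finite sets, which the paper leaves implicit.
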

\begin{rem}It is seems not known whether Thompson group is amen- \linebreak
able. But, according to our main results combined with Corollary \ref{main3}, it can be deduced that the strategy of the study of the dynamical properties of the action of $F$ on dendrite may not be the right tool to highlight the question of amenability of $F$.
\end{rem}

\section{Invariant measures of group action on dendrites.}\label{SMain-3}

We start by pointing out that, by the classical Krylov-Bogolyobov, any amenable group action has a invariant probability measure on dendrite. Indeed, Krylov-Bogolyobov procedure allows us to construct an invariant measure as  limit of sequence of Dirac measures. This later sequence has a weak limit if the space $X$ is compact since the set of the probabilities measures is compact convex space with respect to the weak star topology. The limit is invariant since the group is amenable. We further have by the classical Krein-Milan theorem that the set of extremal measures, that is, ergodic probabilities measures is not empty. We recall that the probability measure $\mu$ is ergodic if the measure of any Borel invariant set is $0$ or $1$. According, to the Choquet representation theorem \cite[p.79]{Rudin}, any invariant measure is a barycenter of ergodic elements of the set of invariant measure. We need here to explore the case of general groups. In this case, such invariant measure may not exist.\\

But, as for the case of the action of the group of homeomorphism on the real line and circle, on may expect to give a characterization of the support of such measure when it is exist. The endpoints will play a great role in this characterization. \\

It follows from the previous discussion that the action of any amenable group has a finite orbit. We thus get that the set of invariant probability measures under the action of any amenable group is not empty since the  probability measure supported uniformly on a finite orbit is in $\Pa(X\looparrowleft G).$




The following result is due essentially to Glasner and Megrelishvili \cite{Gla-M}.

\begin{thm}[\cite{Gla-M}]\label{Gla-M} Let $G$ be an amenable group acting on a dendrite $X$, then, the ergodic invariant measure is either a uniform distribution on a finite set, or the unique ergodic measure on an infinite minimal set.
\end{thm}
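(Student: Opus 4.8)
The plan is to prove Theorem \ref{Gla-M} by reducing everything to the two cases produced by the classification of minimal sets. First I would recall that $G$ amenable implies, via Krylov--Bogolyubov applied to a finite orbit (which exists, since an amenable group acting on a dendrite always has one by Lemma \ref{orbitfinie} together with the fact that amenable groups contain no non-abelian free subgroup), that $\Pa(X\looparrowleft G)\neq\emptyset$, and by Krein--Milman the ergodic measures are the extreme points of this compact convex set. So it suffices to identify the ergodic measures. Let $\mu$ be ergodic. The first step is to locate a minimal set in the support: $\mathrm{supp}(\mu)$ is a closed $G$-invariant set, hence contains a minimal set $M$, and by Lemma \ref{G-continuum} (or Lemma \ref{Infinit-mi}, since the action has a finite orbit) $M$ is either a finite orbit or a Cantor set.

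In the finite-orbit case, say $M=\{a_1,\dots,a_k\}$, I would argue that $\mu$ restricted to $M$ (after normalizing, if $\mu(M)>0$) is the uniform distribution on $M$: invariance forces $\mu(\{a_i\})$ to be constant over the orbit, and then ergodicity forces $\mu$ to be exactly that uniform measure — any mass outside $M$ would have to sit on another invariant set, contradicting ergodicity via a decomposition $\mu=\mu|_M+\mu|_{X\setminus[M]}$ type argument, but more cleanly: the uniform measure $\nu_M$ on $M$ is itself ergodic and $\mu-\varepsilon\nu_M$ would be a nonnegative invariant measure for suitable $\varepsilon$ unless $\mu=\nu_M$, contradicting extremality. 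So in this case $\mu$ is a uniform distribution on a finite set.

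In the Cantor case, $M$ is an infinite minimal set, and by Proposition \ref{pr4proc}(iv) it is the only infinite minimal subset inside $[M]$; moreover by Proposition \ref{pr4proc}(i), $M=E([M])$. Here the key point is that the restricted action $(G,M)$ is equicontinuous by Theorem \ref{equicon}. An equicontinuous minimal system is uniquely ergodic — this is classical: equicontinuity gives that time/group averages of continuous functions converge uniformly to constants (one can embed $(G,M)$ in its Ellis group, a compact group acting transitively, and pull back Haar measure), so there is exactly one invariant probability measure $\nu_M$ supported on $M$, and it is ergodic. Then I would show any ergodic $\mu$ with $M\subset\mathrm{supp}(\mu)$ must equal $\nu_M$: the convex hull $[M]$ retracts onto $M$ via $r_{[M]}$? — more directly, if $\mu\neq\nu_M$ then $\mu$ would have to charge $X\setminus M$, but any point of $X\setminus M$ whose orbit is infinite must have its orbit closure contain an infinite minimal set, forcing it to be $M$ again by uniqueness, while points with finite orbit generate uniform measures on finite orbits, not $\mu$; ergodicity then excludes mixing these, giving $\mu=\nu_M$.

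The main obstacle I expect is the Cantor case — specifically, pinning down that an ergodic invariant measure whose support meets the Cantor minimal set $M$ is forced to be \emph{the} unique measure on $M$ and cannot have extra mass elsewhere in $X$. The clean route is: equicontinuity of $(G,M)$ (Theorem \ref{equicon}) gives unique ergodicity of $(G,M)$, so there is a unique invariant $\nu_M\in\Pa(M)$; then for a general ergodic $\mu$, consider the pushforward under the retraction $r_{[M]}$ or argue that $\mathrm{supp}(\mu)$, being a minimal-set-containing invariant closed set with the action having a finite orbit, has its non-finite-orbit part equal to $M$ by Proposition \ref{pr4proc}(iv), and ergodicity forbids a nontrivial convex combination of $\nu_M$ with a finite-orbit uniform measure. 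Assembling these disjoint alternatives — finite orbit versus Cantor — exhausts the ergodic measures and yields exactly the two types in the statement.
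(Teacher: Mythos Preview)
The paper does not actually prove this theorem: it is stated as a result ``due essentially to Glasner and Megrelishvili'' and cited from \cite{Gla-M}. The paper's own contribution is the next (unnumbered) theorem, whose entire justification is the sentence ``Theorem \ref{equiconlocalden} combined with the strict ergodicity of minimal equicontinuous action \cite[Theorem 6. p.53]{Aus} allows us to strengthen the previous result''. Your core strategy --- use Theorem \ref{equicon} to get equicontinuity of $(G,M)$ for every infinite minimal $M$, then invoke the classical unique ergodicity of minimal equicontinuous systems --- is exactly that sketch, so at the level of the key idea you are aligned with the paper. (A minor point: Lemma \ref{orbitfinie} is stated for \emph{non}-amenable groups without free subgroups; for amenable $G$ you should invoke Theorem \ref{Shi}(\ref{three}) directly, or the Shi--Ye result quoted in the introduction.)

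Where you go beyond the paper is in trying to show that \emph{every} ergodic $\mu$ is actually supported on a minimal set, and here there is a genuine gap. Your argument that ``any point of $X\setminus M$ whose orbit is infinite must have its orbit closure contain an infinite minimal set, forcing it to be $M$ again by uniqueness'' appeals to Proposition \ref{pr4proc}(iv), but that proposition only asserts uniqueness of the infinite minimal set \emph{inside $[M]$}, not inside $\mathrm{supp}(\mu)$ or inside $X$; a priori $\mathrm{supp}(\mu)$ could contain several infinite minimal sets, or could strictly contain $[M]$ together with wandering pieces of positive measure. Likewise the retraction $r_{[M]}$ does not obviously push $\mu$ to $\nu_M$ without already knowing $\mu$ lives in $[M]$. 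The atomic case is fine (an atom forces a finite orbit of full measure by invariance and ergodicity), but for the non-atomic case you still need an argument --- specific to dendrites, or coming from tameness as in \cite{Gla-M} --- that excludes ergodic measures whose support is strictly larger than a single minimal set. The paper does not supply that argument either; it defers to \cite{Gla-M}.
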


Theorem \ref{equiconlocalden} combined with the strict ergodicity of minimal equicontinuous  action \cite[Theorem 6. p.53]{Aus} allows us to strengthen the previous result as follows \footnote{See also Lemma 2 in \cite{Marg}.}.

\begin{thm}
Let $G$ be a group acting on a local dendrite $X$ with finite orbit. Then, the ergodic invariant probability measure is either a uniform distribution on a finite orbit, or it is the uniquely ergodic measure on a minimal infinite set.
\end{thm}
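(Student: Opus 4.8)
The plan is to reduce the statement to the case of a dendrite already handled by Theorem~\ref{Gla-M}, together with the structural analysis carried out in the proof of Theorem~\ref{equiconlocalden}. So suppose $G$ acts on a local dendrite $X$ and admits a finite orbit. Let $\mu$ be an ergodic $G$-invariant probability measure; I must show $\mu$ is either uniform on a finite orbit or the unique invariant (hence ergodic) measure on an infinite minimal set. First I would dispose of the trivial case: if $X$ is itself a dendrite, the conclusion is exactly Theorem~\ref{Gla-M}, since (as noted in the excerpt) the existence of a finite orbit on a dendrite is equivalent to the existence of an invariant measure, and in fact an amenable group is not needed here—what Theorem~\ref{Gla-M} really uses is the classification of minimal sets under the finite-orbit hypothesis (Proposition~\ref{pr4proc}, Lemma~\ref{Infinit-mi}) plus the strict ergodicity of an infinite minimal equicontinuous system. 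Thus the genuinely new content is the passage to a local dendrite that is not a dendrite.

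The key step is then to locate the support of $\mu$. By ergodicity, $\mu$ is supported on a single ergodic component; in particular $\mathrm{supp}(\mu)$ is a closed $G$-invariant set, and by a standard argument $\mu$-a.e.\ point has dense orbit in $\mathrm{supp}(\mu)$, so $\mathrm{supp}(\mu)$ contains a minimal set $M$, and in fact (by invariance and ergodicity, the orbit closure of a generic point being all of $\mathrm{supp}(\mu)$) one gets $\mathrm{supp}(\mu)=M$ is itself minimal—more carefully, any ergodic measure is supported on the closure of a minimal set, and if that minimal set is a proper subset the remaining mass would split off an invariant subset of intermediate measure unless the measure is carried by $M$ alone. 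So $\mathrm{supp}(\mu)=M$ is a minimal set. Now invoke the dichotomy from the proof of Theorem~\ref{equiconlocalden}: either $M$ meets the canonical graph $\Gamma_X$, in which case $M\subset\Gamma_X$ and $M$ is a finite orbit by Lemma~\ref{circle-finite} and Lemma~\ref{graph-finite}; or $M$ is disjoint from $\Gamma_X$, meets only finitely many components $Z_1,\dots,Z_k$ of $X\setminus\Gamma_X$, and after collapsing the finite orbit $\{z_1,\dots,z_k\}=\bigcup(\overline{Z_i}\cap\Gamma_X)$ one obtains a dendrite $\widetilde Z$ in which $M$ is still a minimal set, now infinite. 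In the first case $\mu$ is an invariant measure supported on a finite orbit, and ergodicity forces it to be the uniform distribution on that orbit (a non-uniform invariant measure on a finite orbit would have a proper invariant subset of positive non-full measure). In the second case, $\mu$ pushes forward to an invariant measure on $\widetilde Z$ supported on the infinite minimal set $M$; by Theorem~\ref{equiconlocalden} the action on $M$ is equicontinuous, hence by strict ergodicity of infinite minimal equicontinuous systems \cite[Theorem 6, p.53]{Aus} there is exactly one invariant probability measure on $M$, so $\mu$ is that unique ergodic measure, and pulling back through the collapsing map (which is injective off the collapsed orbit, a $\mu$-null set) identifies it with the unique ergodic measure on $M$ in $X$.

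The main obstacle, I expect, is the bookkeeping around the collapsing map: one must check that the finite orbit $\{z_1,\dots,z_k\}$ has $\mu$-measure zero (so that $\mu$ descends to a genuine measure on $\widetilde Z$ and lifts back unambiguously), and that "infinite minimal set disjoint from $\Gamma_X$" really does carry no atoms—this follows because an atom would have a finite orbit, contradicting infiniteness of $M$ and minimality, so $\mu(\{z_i\})=0$ and more generally $\mu$ is non-atomic on $M$. A secondary subtlety is making sure that the equivalence "finite orbit $\iff$ invariant measure exists" and the uniqueness statements are applied on the right space: the hypothesis is a finite orbit for the action on $X$, which (as shown at the start of the proof of Theorem~\ref{equiconlocalden}) produces a finite orbit inside $\Gamma_X$ and, after collapsing, a finite orbit in $\widetilde Z$, so Theorem~\ref{equicon} and its consequences are legitimately available. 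Once these measure-theoretic null-set checks are in place, the two cases assemble directly into the stated dichotomy.
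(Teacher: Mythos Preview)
The paper does not actually give a detailed proof of this theorem; it records it as a direct consequence of Theorem~\ref{equiconlocalden} together with the strict ergodicity of minimal equicontinuous actions \cite[Theorem~6, p.~53]{Aus}, the measure-theoretic skeleton being inherited from Glasner--Megrelishvili's argument for Theorem~\ref{Gla-M}. In that light, your reduction to a dendrite via the collapsing construction is superfluous: once you invoke Theorem~\ref{equiconlocalden} you already know that the restriction of the action to any infinite minimal set of the local dendrite is equicontinuous, hence uniquely ergodic, and there is no need to rebuild the quotient $\widetilde Z$ or to track $\mu$ through the collapse and back. The paper's route is simply ``equicontinuity on infinite minimal sets $+$ Auslander $\Rightarrow$ done,'' with the identification of $\mathrm{supp}(\mu)$ as a minimal set deferred to the cited literature.

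There is, however, a genuine gap in your argument at precisely that deferred step. Your claim that $\mathrm{supp}(\mu)$ is a minimal set is not justified by the reasoning you give. Ergodicity tells you that for any closed invariant $M\subset\mathrm{supp}(\mu)$ one has $\mu(M)\in\{0,1\}$, but it does \emph{not} rule out $\mu(M)=0$: an ergodic measure can perfectly well be supported on a transitive but non-minimal set (e.g.\ Lebesgue measure for a hyperbolic toral automorphism, whose support is the whole torus while periodic orbits are dense). Your sentence ``the remaining mass would split off an invariant subset of intermediate measure'' is therefore false as stated. What you actually need is: (a) in the atomic case, the argument you give is fine and yields the uniform measure on a finite orbit; (b) in the non-atomic case, a \emph{dendrite-specific} argument showing that a non-atomic ergodic measure is concentrated on an infinite minimal set. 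That is the substantive input borrowed from \cite{Gla-M}, and your write-up should either cite it explicitly or supply it; the general-nonsense ergodicity argument you sketch does not suffice.
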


For the case of dendron, we have the following alternative.

\begin{thm} \label{main4}
Let $G$ act on a dendron $X$. Then either the set $\Pa(X\looparrowleft X)$ contains a uniform distribution on a finite set or it is empty.
\end{thm}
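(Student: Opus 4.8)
The plan is to reduce the dendron case to the structural results already established for group actions with finite orbits, using the canonical retractions and the characterization of finite orbits via invariant measures. Suppose $\Pa(X\looparrowleft G)$ is nonempty, say $\mu$ is a $G$-invariant probability measure on the dendron $X$; I must produce a finite orbit carrying a uniform distribution. The first step is to observe that it suffices to find a finite $G$-orbit in $X$: once $\{a_1,\dots,a_k\}$ is a finite orbit, the normalized counting measure $\frac1k\sum_{i=1}^k\delta_{a_i}$ is $G$-invariant and is a uniform distribution on a finite set, which is exactly the conclusion. So the whole content is: \emph{an invariant probability measure on a dendron forces a finite orbit}.

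To get the finite orbit I would argue by contradiction and invoke Theorem \ref{Shi}: if the action on $X$ has no finite orbit, then there is a unique infinite minimal set $M\subset X$ and the action of $G$ on $M$ is strongly proximal. By Lemma \ref{BouT} applied to the whole space, $M=\{x\in X:\ \forall\nu\in\Pa(X),\ \delta_x\in\overline{\mathrm{Orb}_G(\nu)}\}$, and more to the point, strong proximality of the action on $X$ means the affine action of $G$ on $\Pa(X)$ is proximal. Then for the given invariant measure $\mu$, proximality of the action on $\Pa(X)$ gives a net $(g_i)$ and a measure $\rho\in\Pa(X)$ with $g_i\mu\to\rho$ and $g_i\nu\to\rho$ for, say, $\nu=\delta_x$ with $x\in M$; but $g_i\mu=\mu$ for all $i$ since $\mu$ is invariant, so $\mu=\rho=\lim g_i\delta_x$, forcing $\mu$ to be supported on $M$ and in fact to be the unique minimal measure. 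This is already enough for a contradiction if one prefers: a strongly proximal action on an infinite compact space admits no invariant probability measure, because proximality on $\Pa(X)$ would force any two invariant measures to be equal and, taking $\nu=\delta_x,\delta_y$ for distinct recurrent points of $M$, their images cannot both converge to the single invariant measure while the barycenter condition fails — concretely, if $\mu$ were invariant then $\{\mu\}$ would be a fixed point of an affine proximal flow on the (infinite-dimensional, nontrivial) simplex generated by the orbit closure of some $\delta_x$, which is impossible unless that orbit closure is a point, i.e.\ unless $x$ has finite (singleton) orbit. Hence the action must have a finite orbit.

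The step I expect to be the main obstacle is making the "strongly proximal $\Rightarrow$ no invariant measure" implication fully rigorous in the dendron (non-metrizable) setting, since strong proximality here is phrased via nets on $\Pa(X)$ and one must be careful that the weak-$*$ compactness and the Banach--Alaoglu--Bourbaki argument from Section \ref{tool} apply verbatim; the cleanest route is probably to fix $x\in M$, note $M\subset\overline{\mathrm{Orb}_G(x)}$, pick the invariant $\mu$, and use proximality of $(G,\Pa(X))$ applied to the pair $(\mu,\delta_x)$ to conclude $\mu=\lim g_i\delta_x$ for some net, whence $\mu$ is carried by $M$; then apply proximality again to a pair $(\delta_x,\delta_y)$ with $x,y\in M$ distinct, getting $\lim h_j\delta_x=\lim h_j\delta_y=\delta_z$ for some $z\in M$, and combine with invariance of $\mu$ and the fact that $M$ is perfect (Lemma \ref{G-continuum}) to derive that $\mu(\{z\})$ would have to simultaneously be $0$ and positive. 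Once the contradiction is in place, the dichotomy "finite orbit with uniform distribution, or $\Pa(X\looparrowleft G)=\emptyset$" follows, and one should also remark, as in the discussion preceding Theorem \ref{Gla-M}, that in the dendron case a finite orbit of order one or two is pinned down to an arc, which we do not need for the statement but which mirrors Proposition \ref{pr4proc}(iii).
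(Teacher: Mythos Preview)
Your proposal is correct and follows the same overall route as the paper: dichotomize on whether there is a finite orbit, and in the ``no finite orbit'' case invoke Theorem~\ref{Shi} to get strong proximality, then derive a contradiction with the existence of an invariant $\mu$. You even write down the key identity $M=\{x\in X:\ \forall\nu\in\Pa(X),\ \delta_x\in\overline{\mathrm{Orb}_G(\nu)}\}$ from Lemma~\ref{BouT}, which is exactly what the paper uses.

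Where you diverge is in the derivation of the contradiction, and here you make life much harder than necessary. The paper's argument is a one-liner: since $\mu$ is $G$-invariant, its orbit closure in $\Pa(X)$ is the singleton $\{\mu\}$; applying the displayed description of $M$ with $\nu=\mu$ gives $\delta_x\in\overline{\mathrm{Orb}_G(\mu)}=\{\mu\}$, i.e.\ $\mu=\delta_x$, so $x$ is a fixed point --- contradicting the assumption of no finite orbit. Your detour through proximality on pairs $(\mu,\delta_x)$ and $(\delta_x,\delta_y)$, nets, supports, and the perfectness of $M$ is not wrong, but it is unnecessary and, as you yourself flag, raises delicate issues about nets and weak-$*$ compactness in the non-metrizable setting that the direct argument sidesteps entirely. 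In short: you had the decisive formula in hand and then walked past it.
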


\begin{proof} [\textbf{Proof of Theorem \ref{main4}}] Obviously, we have only two alternative, either the action has a finite orbit or not. If the action has a finite orbit. Then,  by taking the probability measure
$$\mu=\frac{1}{\big|M\big|}\sum_{x \in M}\delta_x,$$
where $\delta_x$ is a Dirac measure on $x$, it is easy to see that $\mu$ is an invariant probability measure. Now, suppose that the action has no finite orbit. Then, by Lemma \ref{BouT}, the action is strongly proximal and the minimal set is given by
$$M=\big\{x \in X/ \forall \mu \in \Pa(X), \delta_x \in \overline{\text{Orb}_G(\mu)}\big\}$$
Assume further that there is  a $G$-invariant measure $\mu$. Then, for $x \in M$, we get  $\delta_x=\text{Orbit}_G(\mu)=\mu$. This contradicts our assumption. The proof of the theorem is complete.
\end{proof}

%


We end this section, by producing an effective construction of invariant measure in the case of amenable group action.\\

For the case of invariant measure, we need the following classical result from \cite[Theorem 10.28]{nadler}.
\begin{lem}If $X$ is a nondegenerate dendrite, then X can be written as follows:
	$$X=E(X) \cup \bigcup_{i=0}^{+\infty}A_i,$$
where $E(X)$ is is the endpoint set of $X$ and each $A_i$ is an arc with endpoints $p_i$ and $q_i$ such that
$\ds A_{i+1} \cap  \Big(\bigcup_{j=1}^{i}A_j\Big) = \{p_{i+1}\},$ for each $i = 1 , 2 ,\cdots$
and $\diam (A_i) \longrightarrow 0$ as $i \longrightarrow +\infty.$
\end{lem}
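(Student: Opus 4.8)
The plan is to realize $X$ as the closure of an increasing sequence of finite subtrees over which the complementary components are uniformly small, and then to cut those trees into short arcs glued in a tree pattern; the asserted decomposition is then obtained by enumerating those arcs.

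First I would establish the auxiliary fact that for every $\varepsilon>0$ there is a finite subtree $T(\varepsilon)\subset X$ such that every connected component of $X\setminus T(\varepsilon)$ has diameter less than $\varepsilon$. This is where compactness, local connectedness and rim-finiteness of the dendrite enter: cover $X$ by finitely many connected open sets $U_1,\dots,U_k$ of diameter $<\varepsilon$ with finite boundaries, and set $T(\varepsilon)=[\partial U_1\cup\cdots\cup\partial U_k]$, the convex hull of a finite set, which is a finite subtree. Any connected $C\subset X$ disjoint from all the $\partial U_i$ must lie inside a single $U_i$, since $C\cap U_i$ and $C\setminus\overline{U_i}$ partition $C$ into relatively open pieces; hence each component of $X\setminus T(\varepsilon)$ lies inside some $U_i$ and has diameter $<\varepsilon$.

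Next, using that the convex hull of the union of two finite subtrees is again a finite subtree, I would build inductively an increasing sequence $T_1\subset T_2\subset\cdots$ of finite subtrees with every component of $X\setminus T_n$ of diameter $<1/n$: put $T_1=T(1)$ and $T_{n+1}=[\,T_n\cup T(1/(n+1))\,]$. Then $\bigcup_n T_n$ is dense in $X$, because each $x\in X$ either lies in $T_n$ or in a component $C$ of $X\setminus T_n$ whose closure has diameter $<1/n$ and meets $T_n$, so $d(x,T_n)<1/n$. Moreover any $x\in X\setminus\bigcup_n T_n$ is an endpoint of $X$: the sets $T_n$ are connected, increasing and avoid $x$, hence lie in a single component of $X\setminus\{x\}$, and as their union is dense there can be no other component, so $x\in E(X)$. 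This already yields $X=E(X)\cup\bigcup_n T_n$.

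Finally I would cut the trees into arcs. For each $n$, fix a finite vertex set of $T_{n+1}$ containing the branch points and endpoints of $T_{n+1}$ and of $T_n$ together with all points of $T_n$ at which $\overline{T_{n+1}\setminus T_n}$ is attached, so that every edge of $T_{n+1}$ lies entirely in $T_n$ or entirely in $\overline{T_{n+1}\setminus T_n}$; then subdivide each edge of $T_1$, and each edge of $\overline{T_{n+1}\setminus T_n}$, into finitely many nondegenerate subarcs of diameter $<1/n$ (possible, an arc being a compact metric space), and list all these arcs stage by stage (within each stage in an order following the forest $\overline{T_{n+1}\setminus T_n}$ rooted along $T_n$) as $A_0,A_1,A_2,\dots$, so that each $A_i$ carries a distinguished endpoint $p_i\in A_0\cup\cdots\cup A_{i-1}$. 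Because $T_{n+1}$ contains no circle, $A_i$ meets the union of the earlier arcs in exactly $\{p_i\}$, giving the required relation $A_{i+1}\cap(A_1\cup\cdots\cup A_i)=\{p_{i+1}\}$; since $\bigcup_i A_i=\bigcup_n T_n$ we obtain $X=E(X)\cup\bigcup_i A_i$; and since every arc introduced from stage $n+1$ on has diameter $<1/n$, the enumeration gives $\diam(A_i)\to 0$. The step I expect to be the main obstacle is the auxiliary fact of the second paragraph---producing a single finite subtree that simultaneously controls the diameters of all the (possibly infinitely many) complementary components---together with organizing the subdivision and enumeration so that the diameters of the listed arcs really do tend to zero.
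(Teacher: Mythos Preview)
The paper does not prove this lemma at all: it is quoted verbatim as ``the following classical result from \cite[Theorem 10.28]{nadler}'' and used without further justification. So there is no in-paper argument to compare against.

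Your proposal is a correct self-contained proof, and in spirit it is the standard one (approximate the dendrite by an increasing sequence of finite trees, then decompose the successive differences into short edges). Two small points worth tightening. First, to get connected open sets with finite boundary simultaneously, you should note explicitly that in a locally connected space the components of an open set $V$ are open, and that in a dendrite the boundary of such a component is contained in $\partial V$; this is what makes the convex hull $T(\varepsilon)$ a genuine finite tree. Second, in the enumeration step your parenthetical ``in an order following the forest $\overline{T_{n+1}\setminus T_n}$ rooted along $T_n$'' is exactly the right idea, but it should be spelled out as a depth-first (or breadth-first) traversal of each component of that forest starting from its unique attachment point in $T_n$, so that every newly listed subarc meets the already-listed ones in precisely its root endpoint; the acyclicity of $T_{n+1}$ then gives $A_{i+1}\cap(A_1\cup\cdots\cup A_i)=\{p_{i+1}\}$ as you say. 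With these details in place the argument is complete, and you have supplied more than the paper does.
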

Let $X$ be a dendrite with a metric $\rho$. For any arc $[a,b]$ in $X$, put
$$d(a,b)=\sum_{i=1}^{+\infty}\frac{1}{2^i}\big|h_i^{-1}\big([a,b] \bigcap A_i\big)\big|,$$
Where $(h_i)$ is a family a homeomorphism defined for each $i$ from $[0,1]$ to $A_i$, $|.|$ denote the Euclidean metric on the interval $[0,1]$.\\
 Then, it is observed in \cite{Shi} and it is easy to see, that $d$ is equivalent to $\rho$.\\

In the same manner, we define a probability measure $\mu$ on $X$ by putting, for any $f \in C(X)$,
\begin{eqnarray}\label{mu-lamesure}
\mu(f)&=&  \sum_{i=1}^{+\infty}\frac{1}{2^i}\int_{h_i^{-1}\big(A_i\big)}f \circ h_i(x) dx\\
&=&
\sum_{i=1}^{+\infty}\frac{1}{2^i}\int_{0}^{1}f \circ h_i(x) dx,\nonumber
\end{eqnarray}
$dx$ denote the Lebesgue measure on the interval $[0,1]$. We further have $\mu\big([a,b]\big)=d(a,b).$\\

\begin{thm}\label{continuous} Let $G$ be an amenable group, and suppose that $G$ acts continuously on a non-degenerate dendrite $X$. Then the compact set  $\Pa(X\looparrowleft G)$ contain an invariant probability measure associated to $\mu$.
\end{thm}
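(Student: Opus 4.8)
\textbf{Proof proposal for Theorem \ref{continuous}.}

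The plan is to show that, after averaging the measure $\mu$ defined in \eqref{mu-lamesure} over a F\o lner sequence of $G$, one obtains a $G$-invariant probability measure; the point is that even though $\mu$ itself need not be invariant, the pushforwards $g\mu$ stay ``close" to $\mu$ in a controlled way because $\mu$ is built from the arc-length-like metric $d$, and $G$ must fix an arc (as noted in Section \ref{tool}, an action on a dendrite with a finite orbit fixes an arc). First I would invoke Lemma \ref{orbitfinie}-type reasoning in the amenable case: since $G$ is amenable, Krylov--Bogolyubov gives an invariant probability measure, hence a finite orbit, hence (by the remark in Section \ref{tool}) a $G$-invariant arc $J=[a,b]\subset X$. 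The measure $\mu$ restricted and normalized appropriately, or rather the construction \eqref{mu-lamesure} together with the identity $\mu([a,b])=d(a,b)$, will be the candidate.

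The key steps, in order: (1) Fix a F\o lner sequence $\{F_n\}$ for $G$ and set $\mu_n=\frac{1}{|F_n|}\sum_{g\in F_n} g\mu$. By Banach--Alaoglu a subsequence converges weak-$\ast$ to some $\nu\in\Pa(X)$. (2) Show $\nu$ is $G$-invariant: for any $h\in G$ and $f\in C(X)$, $h\nu(f)-\nu(f)=\lim_n \frac{1}{|F_n|}\sum_{g\in F_n}\big(f(hg\cdot x)-f(g\cdot x)\big)\,d\mu(x)$, and the sum telescopes over $hF_n\,\Delta\,F_n$, whose relative size tends to $0$; since $\|f\|_\infty<\infty$ and $\mu$ is a probability measure this difference vanishes. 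This is the standard amenable-averaging argument and it already gives \emph{some} invariant probability measure. (3) The substantive part is to identify this $\nu$ with the measure \emph{associated to} $\mu$ in the precise sense intended by the statement — i.e.\ to argue that $\nu$ inherits the structural description of $\mu$ (absolutely continuous with respect to the arc-length parametrizations $h_i$ on the free arcs, giving full measure to $\bigcup_i A_i$ and none to generic endpoints, or conversely concentrated on the invariant arc $J$). For this I would use that $G$ acts by homeomorphisms permuting the arcs $A_i$ in a way compatible with the metric $d$: each $g$ maps arcs to arcs and, because $d$ is $G$-coarsely comparable to $\rho$ and the $A_i$ exhaust $X$ off $E(X)$, the pushforward $g\mu$ is again of the form \eqref{mu-lamesure} for a rearranged decomposition, so the whole family $\{g\mu\}$ lies in a weak-$\ast$ compact set of measures all of which are nonatomic on cut points; hence so is $\nu$, and $\nu$ restricted to any $G$-invariant arc is a multiple of Lebesgue measure there.

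The main obstacle I anticipate is step (3): controlling how $g$ distorts the canonical decomposition $X=E(X)\cup\bigcup_i A_i$. A homeomorphism need not send $A_i$ to some $A_j$ — it sends free arcs to free arcs but can cut and reglue the indexing, and the $\diam(A_i)\to 0$ condition is only preserved up to the equivalence of $d$ and $\rho$. The clean way around this is probably not to track the $A_i$ individually but to work intrinsically: characterize $\mu$ by the property $\mu([a,b])=d(a,b)$ for all arcs, show this property is equivalent to a finite-additivity/consistency condition on arc-masses, note that the averaged $\nu$ satisfies the corresponding averaged identity $\nu([a,b])=\lim_n\frac{1}{|F_n|}\sum_{g\in F_n} d(g^{-1}a,g^{-1}b)$, and then use the $G$-invariant arc $J$ together with equicontinuity-type control (or simply the finite-orbit structure of Proposition \ref{pr4proc}) to pin down the limit. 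Alternatively, if ``associated to $\mu$" is meant only in the weak sense that $\nu$ is obtained from $\mu$ by the F\o lner averaging above, then steps (1)--(2) already suffice and step (3) is unnecessary; I would clarify the intended reading and, in the stronger reading, carry out the intrinsic arc-mass argument just sketched.
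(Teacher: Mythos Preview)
Your steps (1)--(2) are correct and are essentially the paper's proof. The only cosmetic difference is that the paper averages against an invariant mean $\nu$ on $G$ rather than along a F\o lner sequence: it sets $\Phi(f)(g)=\int_X f(g.x)\,d\mu(x)$ and $\widetilde{\mu}(f)=\nu(\Phi(f))$, then checks $G$-invariance directly from left-invariance of $\nu$. Your F\o lner limit is the same construction in sequential form, and either version already delivers the full statement.

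Everything else in your proposal is unnecessary and stems from over-reading the phrase ``associated to $\mu$.'' In the paper this means nothing more than ``the invariant measure $\widetilde{\mu}$ obtained by averaging the specific $\mu$ of \eqref{mu-lamesure} over $G$''; no structural identification of the limit is claimed or proved. In particular:
\begin{itemize}
\item No dendrite geometry is used in the paper's proof --- no finite orbits, no invariant arc, no Proposition \ref{pr4proc}, no control on how $g$ permutes the arcs $A_i$. The argument works verbatim for any amenable group acting on any compact metric space and any starting probability measure; the dendrite and the particular $\mu$ enter only to make the construction explicit.
\item Your step (3) is therefore not part of the theorem, and the obstacles you anticipate (homeomorphisms cutting and regluing the $A_i$, tracking arc-masses under $G$) do not arise. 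You correctly flag this possibility yourself at the end; that reading is the intended one.
\end{itemize}
So: drop the preamble about invariant arcs and drop step (3); what remains is the paper's proof.
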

\begin{proof}[\textbf{Proof of Theorem \ref{continuous}}]By our assumption, let $\nu$ be a mean on $G$ and $\mu$ the measure defined by \eqref{mu-lamesure}. We further assume without loss of generality, that $G$ is a discrete group.  Put $\tau_g(f)(x)=f(g.x)$, for $f \in \Co(X)$, and $\tau_h(F)(g)=F(g.h),$ for $F \in \mathcal{B}(G)$. Therefore,
	for any $f \in \Co(X)$ and $g \in G$, let us put
\begin{eqnarray*}
	\big(\Phi(f)\big)(g)&=&\int f(g.x) d\mu(x).
\end{eqnarray*}
Obviously $\Phi$ is a positive linear application from $\Co(X)$ to $\mathcal{B}(G)$, that is,
\begin{itemize}
\item $\Phi(1_X)=1_G$,
\item $\Phi(f) \geq 0$ if $f \geq 0$, and
\item $\Phi(f+c.g)=\Phi(f)+c.\Phi(g),$, $f,g \in C(X)$ and $c \in \C.$
\end{itemize}
This, combined with the Riesz representation theorem, allows us to define a positive probability measure by putting,
$$\widetilde{\mu}(f)=\nu(\Phi(f)).$$ We further have
\begin{eqnarray*}
g\widetilde{\mu}(f)&=&\widetilde{\mu}\big(\tau_g(f)\big)\\
&=&\nu(\Phi(\tau_g(f))).
\end{eqnarray*}
But,
\begin{eqnarray*}
\Phi(\tau_g(f))(h)&=&\int_X \tau_g(f)(h.x)d\mu(x),\\
&=& \int_X f(gh.x) d\mu(x)\\
&=&\Phi(f)(gh).
\end{eqnarray*}
Whence
\begin{eqnarray*}
g\widetilde{\mu}(f)&=&\int_G\Phi(f)(gh)d\nu(h)\\
&=&\int_G \Phi(f)(h) d\nu\\
&=&\widetilde{\mu}(f),
\end{eqnarray*}
since, for any bounded function $F : G \longrightarrow \C$, we have
$$\int_{G}F(g.h)d\nu(h)=\int_{G}F(h)d\nu(h).$$
The proof of the theorem is complete.	
\end{proof}

\begin{rem}In the proof of Theorem \ref{continuous}, we assume that $G$ is a discrete group. Therefore, the mean is defined on $\mathcal{B}(G)$. For the more general case of locally compact group $G$, the mean is defined on $L^{\infty}(G)$ or $\Co B(G)$  the space of bounded continuous functions.
\end{rem}
At this point, let us further observe that we have the following result which we hope can be used in future studies and research.

\begin{thm}\label{Precurrent}Let $G$ be a group acting on the non-degenerate local dendrite $X$ and assume that the set of recurrent points is not empty. Then there exists a continuous probability measure which gives one or zero to any invariant Borel set.
\end{thm}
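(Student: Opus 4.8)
The plan is to produce the desired continuous measure by a Krylov--Bogolyubov–type averaging construction over the orbit of a recurrent point, and then to rule out atoms by exploiting recurrence together with the rim-finite structure of a local dendrite. First I would fix a recurrent point $x \in X$, so there is a net $(g_i)$ in $G$ with $g_i . x \to x$ and $g_i . x \neq x$ for all $i$. The key structural input is the measure $\mu$ built in \eqref{mu-lamesure} on a dendrite; on a local dendrite $X$ I would first pass to a sub-dendrite: either $X$ is a dendrite, or, using that $X$ contains only finitely many simple closed curves and the canonical graph $\Gamma_X$, one restricts attention to the closure of a connected component of $X \setminus \Gamma_X$ that meets $\overline{\Or{x}}$, which is a dendrite $D$ carrying the constructed continuous ``arclength'' measure $\mu_D$ (with $\mu_D([a,b]) = d(a,b)$ and $\mu_D$ atomless since $d$ is equivalent to the metric and every arc has positive, continuously varying $d$-length). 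I would then average: consider the net of measures $\nu_i = \frac{1}{\,\mathrm{something}\,}$... more precisely take weak-$\ast$ cluster points of the Cesàro-type averages of $g \mapsto g_* (\text{pushforwards of }\mu_D \text{ along the orbit of } x)$, or, more cleanly, a weak-$\ast$ cluster point $\nu$ of $\frac{1}{2}(\delta_{x} + g_{i\,*}\delta_x)$-type constructions is too crude — instead I would average $\mu_D$ itself over a Følner-like piece of the orbit is unavailable without amenability, so the right object is the cluster point of $g_{i\,*}\mu_{D_i}$ where $D_i$ are translates; this needs care.

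The cleaner route, which I would actually carry out: let $\lambda$ be the continuous measure from \eqref{mu-lamesure} on a suitable invariant sub-dendrite (or on $[\overline{\Or{x}}]$, its convex hull, which is a sub-dendrite), and consider the weak-$\ast$ limit points of $g_{i\,*}\lambda$ along the recurrent net. Each $g_{i\,*}\lambda$ is a continuous probability measure (homeomorphisms preserve atomlessness), and the set of continuous probability measures is weak-$\ast$ closed only after intersecting with a suitable bounded family — here the key point is that atoms cannot appear in the limit because $\lambda$ assigns to the arc $[a,x]$ a length $d(a,x)$ that, by Lemma \ref{arc} and the equivalence of $d$ with the metric, controls mass in small balls uniformly; hence the family $\{g_{i\,*}\lambda\}$ is uniformly non-atomic in the sense that $\sup_{z} g_{i\,*}\lambda(B(z,r)) \to 0$ as $r \to 0$ uniformly in $i$, a property preserved under weak-$\ast$ limits. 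This yields a continuous limit measure $\nu$. Finally, to get the invariance-type conclusion — that $\nu$ gives $0$ or $1$ to every $G$-invariant Borel set $A$ — I would argue that because $x$ is recurrent and $[\overline{\Or{x}}]$ is ``almost'' an orbit closure, an invariant set either contains a full arc's worth of $\lambda$-mass near the orbit (forcing $\nu(A)$ close to the total) or misses it; combined with a standard $0$--$1$ argument for the ergodic components one extracts an ergodic (hence $0$--$1$) continuous component via Choquet/Krein--Milman applied to the face of continuous invariant measures.

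The main obstacle I anticipate is exactly the passage that replaces the amenable averaging: without a Følner sequence there is no automatic invariant limit, so the measure $\nu$ obtained as a weak-$\ast$ cluster point of $g_{i\,*}\lambda$ need not be $G$-invariant, only invariant ``in the direction of the recurrent net.'' The real content of the theorem must therefore be that recurrence alone forces a continuous \emph{quasi-invariant} or \emph{ergodic-stationary} object, and the $0$--$1$ conclusion on invariant sets is weaker than full invariance — so I would be careful to prove only that statement: take any $G$-invariant Borel $A$; its indicator is $G$-invariant; evaluate $\nu(A) = \lim_i g_{i\,*}\lambda(A) = \lim_i \lambda(g_i^{-1} A) = \lim_i \lambda(A) = \lambda(A)$, using invariance of $A$. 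Thus $\nu(A) = \lambda(A)$ for every invariant $A$, and then one must show $\lambda$ itself (or a continuous ergodic component of $\nu$) takes only values $0,1$ on invariant sets — which follows because an invariant set meeting $[\overline{\Or{x}}]$ in a set of intermediate $\lambda$-measure would, by minimal-set structure (Lemma \ref{G-continuum}, Proposition \ref{h}), have to separate the orbit closure in a way incompatible with $x$ being recurrent. I would present the construction of $\nu$ and the identity $\nu(A)=\lambda(A)$ as the backbone, flag the ergodic-decomposition step as the delicate point, and lean on the earlier structural propositions to close it.
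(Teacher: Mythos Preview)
The paper's proof is a single citation: it invokes the Glimm--Effros theorem (as presented in Nadkarni, \emph{Basic Ergodic Theory}, p.~91), which asserts that whenever a group acts on a Polish space with a recurrent point $x$, there exists a continuous probability measure on $\overline{\Or{x}}$ that assigns $0$ or $1$ to every $G$-invariant Borel set. The local-dendrite structure plays no role beyond supplying a compact metric space; recurrence alone triggers the black box.

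Your plan tries to manufacture such a measure by hand from the arclength measure $\lambda$ of \eqref{mu-lamesure}, and it has a genuine gap at exactly the step you flag as ``delicate.'' Even granting the identity $\nu(A)=\lambda(A)$ for $G$-invariant $A$ (which itself is not immediate from weak-$\ast$ convergence, since indicator functions are not continuous), it only transfers the problem: you now need $\lambda$ to be $0$--$1$ on invariant sets, and there is no reason for that. The measure $\lambda$ is a purely geometric arclength with no dynamical content; an invariant Borel set can certainly have intermediate $\lambda$-measure --- for instance $\overline{\Or{x}}$ itself is invariant and, when it is a Cantor set, typically has $\lambda$-measure strictly between $0$ and $1$, as does its complement. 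The appeal to Lemma~\ref{G-continuum} and Proposition~\ref{h} cannot close this: those results describe the topology of minimal sets, not the $\lambda$-measure of arbitrary invariant Borel sets. Your proposed escape via ``an ergodic component of $\nu$'' presupposes that some continuous quasi-invariant ergodic measure exists, which is precisely the content of the Glimm--Effros theorem; you would be reproving the result you need to cite.

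A secondary problem: the claim that $\{g_{i*}\lambda\}$ is uniformly non-atomic is unjustified. Homeomorphisms of a dendrite can compress arbitrarily large $\lambda$-mass into arbitrarily small balls, so weak-$\ast$ limits of $g_{i*}\lambda$ can acquire atoms; Lemma~\ref{arc} bounds the diameter of an arc joining two nearby points, not the $\lambda$-measure of preimages of small balls under arbitrary group elements.
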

\begin{proof}By Glimm-Effros theorem (see \cite[p. 91]{Nad}), if $x$ is a recurrent point then there is a continuous probability measure  on $\overline{\textrm{Orb}_{G}(x)}$ which gives mass zero or one to $G-$invariant sets. Since the set of recurrent points is not empty, the result follows.
\end{proof}

\begin{cor} Let $G$ acts on the non-degenerate local dendrite $X$. Suppose that $G$ is an amenable group and the set of recurrent points is not empty. Then, there exists an invariant probability measure which is ergodic.
\end{cor}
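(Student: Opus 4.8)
The plan is to combine the two ingredients stated just before the corollary: Theorem \ref{Precurrent}, which produces a continuous probability measure giving mass zero or one to every $G$-invariant Borel set as soon as there is a recurrent point, and the general fact (invoked for dendrites via Krylov--Bogolyubov and the Choquet/Krein--Milman machinery recalled at the beginning of Section \ref{SMain-3}) that an amenable group acting on a compact metric space always admits an invariant probability measure, and that the set $\Pa(X\looparrowleft G)$, being a nonempty compact convex subset of $\Co^*(X)$, has extreme points, which are precisely the ergodic invariant measures.

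First I would note that since $X$ is a non-degenerate local dendrite it is in particular a compact metrizable space, so $\Pa(X)$ is compact and convex in the weak-star topology; since $G$ is amenable, averaging any Dirac orbit against a left mean (exactly as in the proof of Theorem \ref{continuous}) yields a point of $\Pa(X\looparrowleft G)$, so this set is nonempty. Then I would apply Theorem \ref{Precurrent}: the hypothesis that the set of recurrent points is nonempty gives a continuous probability measure $\lambda$ on $X$ that assigns $0$ or $1$ to every $G$-invariant Borel set. The one point that needs a word of care is that $\lambda$ as produced by Glimm--Effros need not itself be $G$-invariant; what one gets directly is an ergodic-type $0$--$1$ law. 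So the cleanest route is: use amenability to push $\lambda$ to an invariant measure while preserving the $0$--$1$ property, or alternatively decompose an existing invariant measure into ergodic components and argue one of them must be continuous.

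Concretely, I would argue as follows. By Theorem \ref{Gla-M} (or the strengthening just above it for local dendrites with finite orbit) combined with amenability, the ergodic invariant measures on $X$ are of exactly two types: uniform distributions on finite orbits, and the unique ergodic measure carried by an infinite minimal set. A uniform measure on a finite orbit is purely atomic, hence not continuous; so if every ergodic invariant measure were atomic, every invariant measure (being a barycenter of ergodic ones by Choquet) would be supported on the countable union of finite orbits, i.e.\ on a countable set, and in particular the measure $\lambda$ from Theorem \ref{Precurrent}, which is continuous and nonzero, could not be an invariant measure --- but one still needs $\lambda$ invariant to get the contradiction. To close this, I would instead directly produce an infinite minimal set: a recurrent point $x$ with $g_i x\to x$, $g_i x\neq x$ has $\overline{\Or{x}}$ infinite, so by Lemma \ref{G-continuum} and Lemma \ref{Infinit-mi} it contains an infinite (Cantor) minimal set $M$; by the strict ergodicity of equicontinuous minimal systems (Theorem \ref{equiconlocalden} applies once we know there is a finite orbit, which amenability guarantees) $M$ carries a unique invariant probability measure $\nu$, which is ergodic, and since $M$ is a Cantor set $\nu$ is non-atomic, i.e.\ continuous. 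That $\nu$ is the desired measure.

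The main obstacle is the bookkeeping around invariance: Theorem \ref{Precurrent} alone gives a continuous $0$--$1$ measure but not an invariant one, so the honest proof must route through the structure theory (finite orbit $\Rightarrow$ equicontinuity on the infinite minimal set $\Rightarrow$ unique, hence ergodic, invariant measure on a Cantor set) rather than through Theorem \ref{Precurrent} in isolation. Once that is recognized the corollary is essentially immediate: amenability supplies a finite orbit (via the Krylov--Bogolyubov remark at the start of Section \ref{SMain-3}), a nonempty recurrent set supplies, through the above, an infinite minimal Cantor set $M$, Theorem \ref{equiconlocalden} makes $(G,M)$ equicontinuous, and strict ergodicity of minimal equicontinuous systems (\cite[Theorem 6, p.53]{Aus}) gives the unique ergodic invariant probability measure on $M$, which is automatically continuous because $M$ is perfect.
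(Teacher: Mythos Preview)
The paper's proof follows exactly the first option you listed and then set aside: it takes the continuous $0$--$1$ measure $\sigma$ supplied by Theorem~\ref{Precurrent} and averages it against a mean on $G$ via the procedure of Theorem~\ref{continuous}. The step you appear to distrust --- that the average $\widetilde{\sigma}$ remains ergodic --- is in fact immediate: if $A$ is a $G$-invariant Borel set then $g^{-1}A=A$ for every $g$, so the map $g\mapsto\sigma(g^{-1}A)$ is the constant $\sigma(A)\in\{0,1\}$ and hence $\widetilde{\sigma}(A)=\sigma(A)\in\{0,1\}$. (Continuity is preserved for the same reason: $\widetilde{\sigma}(\{y\})$ is a mean of the values $\sigma(\{g^{-1}y\})=0$.) So there is no need to detour through the structure theory; the ``push $\lambda$ to an invariant measure while preserving the $0$--$1$ property'' route you mentioned \emph{is} the paper's proof.

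Your concrete structure-theory route, on the other hand, has a genuine gap at the assertion that $\overline{\Or{x}}$ contains an \emph{infinite} minimal set. Lemmas~\ref{G-continuum} and~\ref{Infinit-mi} only classify what a minimal set can look like (finite orbit, or Cantor once a finite orbit exists somewhere); they say nothing about which of these types must actually occur inside a given infinite closed invariant set. An infinite orbit closure may, a priori, contain only finite minimal sets, and nothing you cite excludes this. Ironically, the cleanest way to \emph{produce} the infinite minimal set you want is to run the paper's averaging argument first: the resulting continuous ergodic invariant measure is supported in $\overline{\Or{x}}$ and, by the classification of ergodic measures stated just above the corollary, must be the uniquely ergodic measure on an infinite minimal set. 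So your conclusion about the existence of such an $M$ is correct, but it is a \emph{consequence} of the averaging argument rather than an independent replacement for it.
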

\begin{proof}By Theorem \ref{Precurrent}, there exists a continuous probability measure with gives one or zero to any invariant Borel set. Let $\sigma$ be such probability measure. By applying the same procedure as in the proof of Theorem \ref{continuous}, we get an invariant probability measure which is ergodic. This finish the proof of the corollary.
\end{proof}

\textbf{Questions.}
\begin{enumerate}
\item Let $G$ be a semi-group and $X$ a local dendron.  It is natural to ask if Theorem \ref{equiconlocalden} can be extended to the action of $G$ to $X$.
\item Let $G$ be a group and $X$ a dendrite. One may ask on the classification of invariant $\sigma$-finite measures, ergodic invariant $\sigma$-finite measures, stationary measures (Quasi-invariant probability measures) for amenable and non-amenable group and their ergodic proprieties.
\end{enumerate}
\vskip 1em

Let us point out that a characterization of discrete spectrum for action of amenable group on compact space is given in \cite{el-abd-Mahesh}. We notice here that according to Theorem \ref{equiconlocalden} the restriction action to any minimal of local dendrite has a topological discrete spectrum.\\

\textit{Acknowledgements.} The first author would like to express his heartfelt thanks to Ahmed Bouziad, Mahesh Nerurkar, Enhui Shi and XiangDong Ye for valuable discussions on the subject. He would also like to thank Habib Marzougui and University of Carthage, Faculty of Science of Bizerte (where this paper was started) for the invitation and for their hospitality. The authors would like to thank H. Marzougui for valuable discussions.\\

The second author would like to thanks university of Rouen Normandy where the paper was revised and augmented, for the invitation and hospitality. \\

The work of the second author was supported by the research unit:
``Dynamical systems and their applications'' (UR17ES21), Ministry of Higher Education and Scientific Research,
Tunisia.

\bibliographystyle{amsplain}

\end{document}